\def\mapright#1#2#3{\smash{\mathop{\hbox to
#3{\rightarrowfill}}\limits^{#1}_{#2}}}
\def\mapleft#1#2#3{\smash{\mathop{\hbox to
#3{\leftarrowfill}}\limits^{#1}_{#2}}}
\def\mapright#1#2{\smash{\mathop{\hbox to 0.90cm{\rightarrowfill}}\limits^{#1}_{#2}}}
\def\mapleft#1#2{\smash{\mathop{\hbox to 0.90cm{\leftarrowfill}}\limits^{#1}_{#2}}}
\def\mapleftright#1#2{\smash{\mathop{\hbox to 0.80cm{\leftarrowfill \rightarrowfill}}\limits^{#1}_{#2}}}
\title{Framed link presentations of 3-manifolds\\ by an $O(n^2)$ algorithm, I: gems and their duals
\footnote{2010 Mathematics Subject Classification: 
57M25 and 57Q15 (primary), 57M27 and 57M15 (secondary)}} 
\author{Sóstenes Lins and Ricardo Machado}
\date{\today}
\begin{document}

\maketitle

\begin{abstract}
Given an special type of triangulation $T$ for an oriented closed 3-manifold $M^3$ we produce
a framed link in $S^3$ which induces the same $M^3$ by an algorithm
of complexity $O(n^2)$ where $n$ is the number of tetrahedra in $T$ . 
The special class is formed by the duals of the {\em solvable
gems}. These are in practice computationaly easy to obtain from any triangulation for $M^3$. The conjecture that
each closed oriented 3-manifold is induced by a solvable gem has been verified in an exhaustible way for 
manifolds induced by gems with few vertices. Our algorithm produces framed link presentations
for well known 3-manifolds which hitherto did not one explicitly known. A consequence of this work
is that the $3$-manifold invariants which are presently only computed from surgery presentations 
(like the Witten-Reshetkhin-Turaev invariant) become computable also from triangulations. This seems
to be a new and useful result. Our exposition is partitioned into 3 articles.
This first article provides our motivation, some history on presentation of 3-manifolds 
and recall facts about gems which we need.
\end{abstract}

\section{Introduction}

This is the first of 3 closely related articles.
References for the companion papers are \cite{linsmachadoB2012} and \cite{linsmachadoC2012}.

There are two main lines of presentations for 3-manifolds: the ones based on triangulations and 
the ones based on framed links surgery. These two types of presentations are complementary 
and so far, as long as we know, it is unknown how to go from a presentation of the first type 
to a presentation of the second by an efficient, effectively implementable algorithm.
The goal of this work
is to present such an algorithm. 
Given a triangulation for a 3-manifold we can easily produce, by a polynomial algorithm,
a 3-gem $G$ inducing it.
We may suppose that $G$ has no dipoles, no $\rho$-pairs (\cite{lins1995gca} Section 2.3)
neither 4-clusters, \cite{lins1995gca} Section 4.1.4); otherwise the gem is simplified
to one inducing the same manifold with less vertices.
If that is the case, then $G$ admits a {\em resolution} (briefly to be defined). 
This is an empiral truth for all the gems that we have dealt so far. 
A proof that such kind of gems are always resoluble has been elusive and seems difficult to prove.
The reason for the difficulty could be, of course, that it is false.
However, we believe that it is true and the proof has yet to be found. New ideas and
recent strong results in 3-manifolds maybe necessary to succeed in proving.
At any rate, the result and the techniques developed in this series of 3 articles
are deep and are based upon a hypothesis that we, so far, have at hand. The lack of a proof 
of this empirical truth, should not cast a shadow in the quality of our work. Nevertheless,
proving it would make our results much better. We leave this enterprise for the future.

In this work we prove that gems with resolutions and 
framed links presentations of 3-manifolds can
be considered {\em computationally equivalent}.

F. Costantino and D. Thurston \cite{costantino20083}
provide an $O(n^2)$ algorithm for a related problem: show that 3-manifolds efficiently
bound 4-manifolds. 
Their algorithm has a large constant 
in its complexity bound and is not amenable for implementation. 
Our algorithm can be effectively
implemented, does not mention 4-manifolds
and has also complexity $O(n^2)$, where $n$ is the number of tetrahedra in the 
input triangulation. Here no attempt is made to display a
specific constant for the worse case performance.
However, it becomes clear that this constant is small.
The goal of this work is to prove that
given a resoluble gem with 2$n$ vertices inducing a closed orientable 3-manifold $M^3$,
there is an $O(n^2)$-algorithm producing a framed PL-link in $\mathbb{R}^3$ with at most 
$n$ components which induces
$M^3$. Moreover, the number of 1-simplices that form the link is no more than $12 n^2$.

The motivation for this work dates back to April/1993 where at a
meeting in the Geometry Center in Minneapolis, Jeffrey Weeks (a former student of W. Thurston 
and the creator of SnapPea)
asked the first author whether he had a framed link presentation of the Weber-Seifert
hyperbolic dodecahedral space. He had not and apparently 
nobody else knew how to go, by an efficient computational scheme,
from an explicit triangulation or Heegaard diagram of a 3-manifold to a surgery presentation
of the same manifold. The present work produces such an scheme. 
Any singled universal constant associated with the $O(n^2)$ bound is huge when compared with a 
concrete example. Thus any universal constant is just a loose upper bound for the worst case scenarium.
As for the 2-exponent, it is tight according to D. Thurston (personal communication).
Very recently, the Weber-Seifert dodecahedral hyperbolic space has been given much attention:
it was proved to be non-Haken, \cite{burton2012weber}.

In the fifth of a series of 8 papers in the beginning of the fifties Moise proved the fundamental fact that
every 3-manifold admits a triangulation, \cite{moise1952affine}.
In 1962 Lickorish \cite{lickorish1962representation} motivated by a question of Bing proved a 
result which has been basic in the
presentation and investigation of closed oriented 3-manifolds since then. 
These two papers provide two complementary forms to present 
3-manifolds, in a way to be made clear in the sequel.  
Even though Lickorish used Moise's theorem to obtain his in a
constructive way, the approach (based on Heegaard decompositions and diagrams) 
is too topological to provide a usable combinatorial effectively implementable algorithm. 
A clear presentation of Lickorish's theorem appears in \cite{stillwell1993classical}.

The 3-manifolds treated here are closed and oriented, so Lickorish's result 
applies. This result was complemented 16 years later by the discovery of Kirby, which provided
a set of two (non-local) moves which connects any two framed links inducing the same 3-manifold,
\cite{kirby1978calculus}. Kirby's calculus was reformutaletd in various useful ways,
like the Fenn and Rourke approach, \cite{fenn1979kirby} or Kauffman's blackboard framed
links, \cite{kauffman1991knots}. Recently, in \cite{martelli2011finite}, Martelli's provided a
sufficient set of 4 local moves on framed links. This was a very surprising result, opening the way 
for the discovery of new invariants for 3-manifolds. All this research activity shows that
3-manifold theory can be seen as a subtle chapter in the theory of links.

The triangulation-based approach of 3-gems (introduced in \cite{lins1985graph}) provides
a successful approach concerning explicitly census of 3-manifold: we are capable of
finding canonical gem-representatives (named {\em attractors)} for 3-manifolds given by
gems with few vertices. The approach is based on lexicography and worked  successfully
for gems with less than 30 vertices, without missings nor duplicates. The approach
of (\cite{lins1995gca}) can in principle be extended for gems with more vertices.

An algorithm to provide a gem directly from the drawing of a blackboard framed link
inducing the same 3-manifold first appears in Chapter 13 
of the joint monography of L. Kauffman and the first author, \cite{kauffman1994tlr}. 
Each crossing in the link corresponds to
12 vertices in the gem. This algorithm is improved in L. Lins' 
thesis, \cite{lins2007blink},
where the 12 vertices are reduced to 8. 
With this work the census of 3-manifolds given in  \cite{lins1995gca}
based on gems and in \cite{lins2007blink} in terms of links finally can be compared
back and forth. These two languages are complementary in the following sense: 
the gem approach, by its rich simplification
theory based on lexicography and a combinatorial simplifying dynamics, 
is adequate in finding the homeomorphism between
two 3-manifolds, enabling a proof that they are the same. The link approach permits us to compute
the Witten-Reshetkhin-Turaev quantum invariants which are strong and
frequently provide a proof that they are distinct.
Together these approachs have been successful in providing census of {\em `small'}
3-manifolds. Only two pairs of 3-manifolds remain unsolvable in
the domain of L. Lins' thesis, \cite{lins2007blink}.

As every 3-manifold seems to be induced by resoluble gems
(the {\em Main Conjecture} of gem theory), our result,
yielding an effective procedure to link the two languages 
is important in face of the connection of efficient algorithms and 3-manifolds,
\cite{thurston2010}. In fact these days are an exciting time for 3-manifolds: 
in March of 2012, Ian Agol, of the University of California at Berkeley, 
settled the last 4 of the 23 of Thurston's 1982 questions in one stroke, see
\cite{Klarreich1012}.

The authors want to thank D. Thurston for having called their attention to
reference \cite{costantino20083} and for answering various questions in the
context of this work. This paper completely reformulates previous Lins' 
work in the same topic, \cite{lins2007cdlt}, making it obsolete.
We are indebted to the Centro de Inform\'atica, UFPE/Recife, Brazil and
to the Departamento de Matem\'atica, UFPE/Caruaru, Brazil 
for financial support. Lins is
also supported by a research grant from CNPq/Brazil, Proc. 301233/2009-8.

\section{Gems and their duals}

For background material on PL- and algebraic topology 
we refer to \cite{hatcher2002algebraic} and \cite{rourke1982introduction}.
Here we review the construction of a 3-manifold from a 3-gem and the basic facts
that we need from this theory.
More details in \cite{lins1985graph} and 
\cite{lins1995gca}.

\subsection{Gems}
A {\em (3+1)-graph} \index{(3+1)-graph} $G$ is a connected regular graph of degree 4 where 
to each vertex there are four incident differently colored edges in the color 
set $\{0,1,2,3\}$.
\index{residue}
For $I \subseteq \{0,1,2,3\}$, an {\em $I$-residue} is a component of the subgraph 
induced by the $I$-colored edges. Denote by $v(G)$ the number of
$0$-residues (vertices) of $G$. For $0\le i < j\le3$, an $\{i,j\}$-residue is also 
called an $ij$-gon (it is an even polygon, where the edges are 
alternatively colored $i$ and $j$). Denote by $b(G)$ the total number of $ij$-gons 
for $0\le i < j\le3$. Denote by $t(G)$ the total number of 
$\widehat{ \{i\}  }$-residues for $0\le i\le 3$, where the upper hat means complement on 
$\{0,1,2,3\}$.

 A {\em $3$-gem} \index{3-gem} is a $(3+1)$-graph $G$
satisfying $v(G)+t(G)=b(G)$. This relation is equivalent to having the vertices, edges and bigons 
restricted to any $\{i,j,k\}$-residue inducing a plane graph where the faces are bounded by the 
bigons. Therefore we can embed each such $\{i,j,k\}$-residue into an sphere $\mathbb S^2$. We consider
the ball bounded this $\mathbb S^2$ as induced by the  $\{i,j,k\}$-residue. For this reason
an $\{i,j,k\}$-residue in a 3-gem, $i<j<k$, is also called a \index{triball} {\em triball}.
An $ij$-gon appears once in the boundary of triball $\{i,j,k\}$ and once in the boundary
of triball $\{i,j,h\}$. By pasting the triballs along disks bounded by all 
the pairs of $ij$-gons, $\{i,j\} \subset \{0,1,2,3\}$ of a gem $G$,
we obtain a closed 3-manifold denoted by $|G|$. The manifold is orientable if and only if 
$G$ is bipartite, \cite{lins1985graph}.

An $\{i,j,k\}$-residue with $2$ vertices is called a \index{blob} {\em blob over an $h$-colored edge}, or 
an $h$-edge, where $\{0,1,2,3\}=\{h,i,j,k\}$.
Suppose that some $I$-residue $R$ has precisely $2$ vertices $u$ and $v$.
$R$ is an {\em $I$-dipole} \index{dipole} if $u$ and $v$ are in distinct $\widehat{I}$-residues. 
The cancelation of an $I$-dipole is the operation that (topologicaly) delete its edges and vertices 
and merge the $2 |\widehat{I}| $ free ends by identifying pairs along edges of the same 
colors in $\widehat{I}$. The {\em creation of an $I$-dipole} is the inverse operation.
A basic result in the theory of gems is the following result:

\begin{proposition}\label{prop:twogemssame}
Two gems induce the same $3$-manifolds if and only 
if they are linked by a finite number moves, where each
move is either a dipole creation or else a dipole cancellation. 
\end{proposition}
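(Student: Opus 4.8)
The statement is an equivalence, so I would prove the two implications separately; they are of very different character. The direction asserting that dipole moves preserve $|G|$ is the routine one, verified locally and case by case according to $|I|\in\{1,2,3\}$. For an $I$-dipole on vertices $u,v$, the two vertices together with the $|I|$ edges joining them span a small sub-configuration whose contribution to the triball pasting is a standard ball attached to the rest along part of its boundary. Cancelling the dipole collapses this ball onto that part of its boundary, a PL-homeomorphism that is the identity outside a neighborhood of the dipole; hence $|G|$ is unchanged. The condition that $u$ and $v$ lie in distinct $\widehat I$-residues is exactly what guarantees the collapsed region is an honest ball rather than something with nontrivial topology, so it cannot be dropped. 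Dipole creation is the inverse and needs no separate argument.

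For the converse, that a homeomorphism $|G|\cong|G'|$ forces a dipole path between $G$ and $G'$, I would route the proof through PL triangulation theory. First, to each gem $G$ attach the colored pseudo-triangulation $K(G)$ dual to its triball decomposition: one tetrahedron per vertex of $G$, its four $2$-faces colored by $\{0,1,2,3\}$, with the $c$-colored faces of adjacent tetrahedra identified across each $c$-colored edge of $G$. The underlying space of $K(G)$ is $|G|$, and conversely every colored pseudo-triangulation of a closed manifold arises as some $K(G)$. Granting $|G|\cong|G'|$, Pachner's theorem supplies a finite chain of bistellar (the $2$-$3$ and $1$-$4$) moves connecting $K(G)$ to $K(G')$ as ordinary triangulations. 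The plan is then to translate this chain into dipole moves on the gem side.

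The crux, and the step I expect to be the main obstacle, is the clash between the coloring that gems require and the coloring-blindness of Pachner's theorem: a bare bistellar flip on a $4$-colored complex will in general destroy the coloring, so not every intermediate triangulation in Pachner's chain corresponds to a gem. I would resolve this by passing to barycentric subdivisions, which are canonically properly $4$-colored (color each barycenter by the dimension of the simplex it subdivides) and hence always come from gems. The two facts to establish are, first, that the barycentric subdivision of $K(G)$ is reached from $G$ by an explicit, controlled sequence of dipole creations, and second, that each Pachner move, carried out between the subdivided colored complexes, lifts under the $K(\cdot)$ dictionary to a finite block of dipole creations and cancellations. Verifying that every such local lift decomposes into exactly the three admissible dipole types---and that the coloring is restored after each block---is where the genuine work sits; once that is in place, finiteness and the fact that every intermediate complex is again a gem follow directly.
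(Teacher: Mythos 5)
The paper does not actually prove this proposition: its ``proof'' is a pointer to the literature (Ferri--Gagliardi's \emph{Crystallisation moves} and Lins' blobs paper), because the statement is a deep known theorem, not something re-derived here. So the relevant comparison is between your sketch and those proofs. Your first direction (dipole moves preserve $|G|$) is essentially right and is indeed the routine half. The problem is the converse. What you have written there is a plan, not a proof: you yourself isolate the two pivotal claims --- (i) that $G$ is connected by dipole moves to the gem of the barycentric subdivision of $K(G)$, and (ii) that each Pachner move, after subdividing to restore the vertex-coloring, lifts to a finite block of dipole creations/cancellations --- and then defer both (``where the genuine work sits''). Those two claims \emph{are} the theorem; everything else in your argument (Pachner's theorem, coloring barycenters by dimension, Moise's theorem to pass from homeomorphism to PL-homeomorphism) is standard scaffolding. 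A referee would count this as a genuine gap, not a compressed proof.

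Two further points to be careful about if you try to complete this route. First, Pachner's theorem applies to simplicial triangulations of a PL manifold, while $K(G)$ is a pseudo-triangulation (a crystallization has only four vertices in its dual complex), so you must subdivide \emph{before} invoking Pachner, not merely to repair colorings afterwards; your sketch has this ordering blurred. Second, be aware that the cited proof of Ferri--Gagliardi (1982) cannot be, and is not, your argument: Pachner's theorem postdates it. Their proof runs through the existence of simplicially isomorphic subdivisions of PL-homeomorphic manifolds and a direct analysis showing subdivisions are reachable by dipole moves. A Pachner-based proof along your lines was worked out later in the crystallization literature (e.g.\ by Casali), so your strategy is viable --- but as submitted, the hard half of the equivalence is asserted rather than established.
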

\begin{proof}
Complete proofs of this result for dimension $n$
are in \cite{ferri1982crystallisation}, \cite{lins2006blobs}. 
\end{proof}

An {\em $|I|$-residue} is an $I$-residue for some $I \in \{0,1,2,3\}$. Thus,
an $h$-blob is a $3$-residue. It follows from the definitons that a 
blob over $h$ is an $\widehat{ \{h\} }$-dipole, $h=0,1,2,3$. 
Creation/cancellation of an $h$-blob is a ``local move'' because by definiton of
blob, their two ends are in distinct $\{h\}$-residues. A $3$-dipole 
creation/cancellation is a local move. The same is not true for $2$- and 
$1$-dipole creations/cancellations. The information necessary to conclude that
they are indeed dipoles is spread in the whole graph. A great amount of research
has been devoted to finding local moves. See for instance \cite{lins1999thin}.

\subsection{Twistors and their duals, hinges }

Since our 3-manifolds 
are oriented, the gems inducing them are bipartite  (\cite{lins1995gca}) 
and their vertices are of two classes given by their parity.

Twistors in gems have been introduced in \cite{lins1997twistors}. 
But, for completeness, we recall their basics properties.
Let $(i,j,k)$ be a permutation
of $(1,2,3)$. An \index{twistor} {\em $i$-twistor in a bipartite gem} 
is a pair of vertices $\{u,v\}$ of the same parity such that
they are in the same $0i$-gon, the same $jk$-gon and in distinct $0j$-, \,$ik$-,\, 
$0k$- and $ij$-gons.
The dual of twistors are called {\em hinges} \index{hinge} and are formed by a pair of tetrahedra with a pair
of opposite edges identified. A hinge is embedded into the $M^3$ induced by the gem. When we remove
a hinge from $M^3$, the resulting $3$-manifold acquires an open toroidal hole in its bundary. In the
gem this corresponds to deleting the pair of vertices $u,v$, thus producing 8 pendant edges. These
edges correspond to 8 triangles forming the toroidal boundary of the resulting 3-manifold.
Note that for a sufficiently small $\epsilon >0$,
an $\epsilon$-neighborbood of a hinge corresponds to an embedded solid torus in $M^3$.
Twistors and hinges are central objects in this work.
We also use the notion of \index{antipole} {\em antipoles}. An {\em $i$-antipole in a bipartite gem} 
is a pair of vertices $\{u,v\}$ of distinct parity such that
they are in the same $0i$-gon, the same $jk$-gon and in distinct $0j$-, \,$ik$-,\, 
$0k$- and $ij$-gons.

\begin{figure}[!htb]
\begin{center}
\includegraphics[width=14cm]{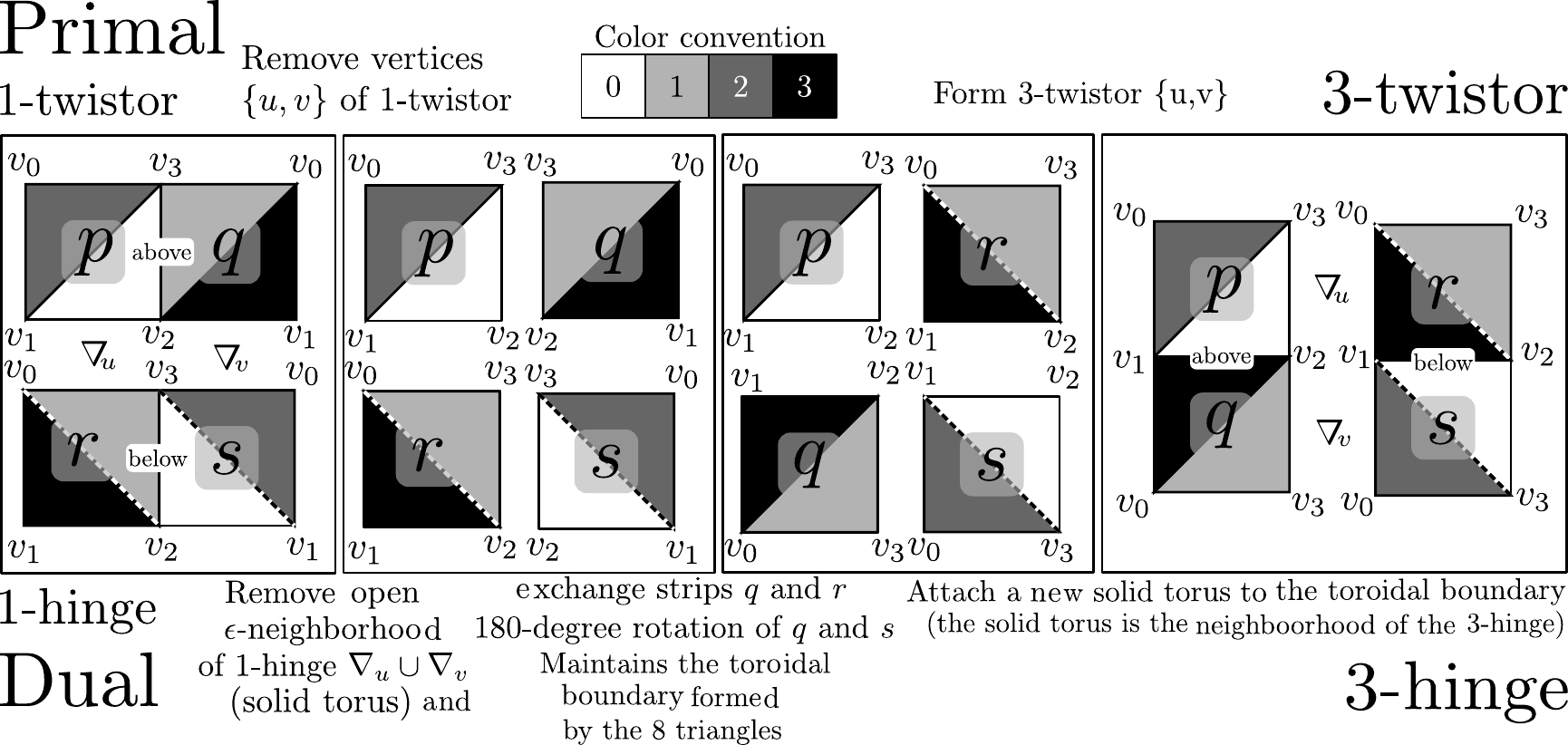}
\caption{\sf Tetrahedra $\nabla_u, \nabla_v$, hinges and strips:
Steps involved in the dualization of a 13-twisting.}
\label{fig:hinges}
\end{center}
\end{figure}


The configuration which corresponds to an $i$-twistor in the dual is named a {\em $i$-hinge}.
In Fig. \ref{fig:hinges} we present the dual steps involved in a $13$-twisting.


\begin{figure}[!htb]
\begin{center}
\includegraphics[width=11cm]{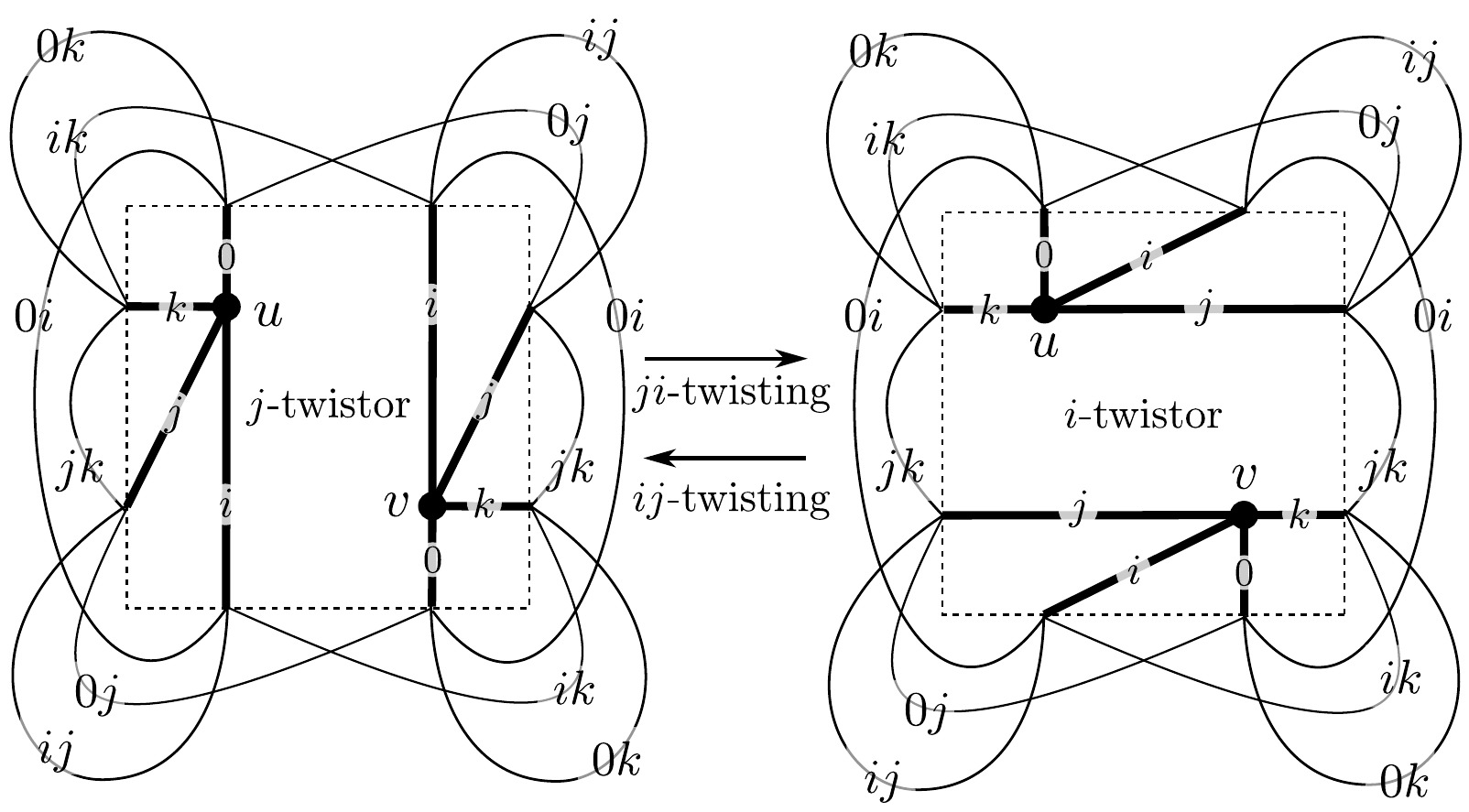} \\
\caption{\sf Obtaining an $i$-twistor by $ji$-twisting a $j$-twistor.}
\label{fig:jitwisting1}
\end{center}
\end{figure}

\index{twisting}
For $i\neq j,\,\{i,j\} \subset \{1,2,3\}$, the $ji$-twisting of a $j$-twistor 
is the operation which exchange
the $i$ and $j$ neighbors of $u$ and $v$, see Fig. \ref{fig:jitwisting1}. Verify that the resulting 
configuration in the right side of Fig. \ref{fig:jitwisting1} 
is an $i$-twistor by analising the external connections in both sides of that figure. 

\begin{proposition} 
\label{prop:twisting}
The $ji$-twisting of a $j$-twistor is an internal operation in the class of bipartite gems.
\end{proposition}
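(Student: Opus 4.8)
\emph{Plan.} To say that the $ji$-twisting is \emph{internal} to the class of bipartite gems is to say that, applied to a $j$-twistor $\{u,v\}$ inside a bipartite gem $G$, it produces again a bipartite gem $G'$. The move need not preserve $|G|$ --- as the hinge picture shows, it re-glues a solid torus, so in general $|G'|\neq|G|$ --- and accordingly I only have to certify that $G'$ is a legal gem and is bipartite, not that it induces the same manifold. Thus I would split the proof into three checks: (i) $G'$ is a $(3+1)$-graph; (ii) $G'$ is bipartite; (iii) $G'$ satisfies the gem relation $v(G')+t(G')=b(G')$, equivalently each of its $3$-residues (triballs) embeds in $\mathbb{S}^2$ with its faces bounded by bigons.

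First I would dispatch (i) and (ii), which are immediate from the local, parity-respecting nature of the move. By construction (Fig.~\ref{fig:jitwisting1}) the twisting only detaches and reattaches the two $i$-colored and two $j$-colored edges meeting $u$ and $v$; every vertex retains exactly one edge of each color $0,1,2,3$, so $G'$ is again $4$-regular and properly edge-colored, and since no vertex is removed and the untouched $0$-edges together with the shared $0j$- and $ik$-gons keep the six affected vertices mutually reachable, $G'$ stays connected. For (ii) the hypothesis that $u$ and $v$ have the \emph{same} parity is exactly what is used: the vertices $u,v$ lie in one class of the bipartition, so --- being non-adjacent, as two vertices of one class cannot be joined by an edge --- their $i$- and $j$-neighbors all lie in the opposite class; the reattachment prescribed in Fig.~\ref{fig:jitwisting1} again joins $\{u,v\}$ to these opposite-class vertices, so the $2$-coloring of the vertex set remains proper and $G'$ is bipartite.

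The substance of the proof is (iii), and this is where I expect the only real work. Since the move alters only colors $i$ and $j$ near $u$ and $v$, the sole residues that can change are the two triballs containing both $i$ and $j$, namely $\{0,i,j\}$ and $\{i,j,k\}$, and the two triballs containing exactly one of them, $\{0,i,k\}$ and $\{0,j,k\}$; every residue not meeting the modified patch, and the whole of each affected triball outside a small disk around the reconnected edges, is left verbatim. This is precisely the content of matching the \emph{external connections} on the two sides of Fig.~\ref{fig:jitwisting1}: they coincide, so each triball is altered only inside one patch. I would then verify, patch by patch, that the local replacement carries a spherical piece to a spherical piece with its bigon/face incidences intact, so that planarity of each triball is preserved; concretely one reads off how the alternating cycles reorganize, the shared $0j$- and $ik$-gons of the $j$-twistor becoming the shared $0i$- and $jk$-gons of an $i$-twistor while the $0k$-gon (which uses neither color $i$ nor $j$) is unchanged, and one checks that these changes leave the Euler bookkeeping $v+t-b$ equal to $0$. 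The delicate point, and the main obstacle, is to be sure the reattachment neither fuses two triballs nor introduces a handle in any one of them; the topological reading that the move merely re-glues the solid-torus neighborhood of the hinge along its boundary torus confirms in advance that no such pathology occurs, and the case analysis against Fig.~\ref{fig:jitwisting1} turns this into the combinatorial certificate that $G'$ is a gem.
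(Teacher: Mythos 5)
The paper gives no argument here at all: its ``proof'' of Proposition~\ref{prop:twisting} is the single line ``See Proposition 2 of \cite{lins1997twistors}'', so there is nothing internal to compare your attempt against; your reading of ``internal operation'' and your split into (i)--(iii) are sensible on their own terms. But your plan has a genuine gap exactly at the place you yourself call ``the main obstacle''. The justification you offer there --- that the move ``merely re-glues the solid-torus neighborhood of the hinge along its boundary torus'', so no triball can fuse or acquire a handle --- is circular. The hinge/surgery reading of the $ji$-twisting is a statement about the dual manifold of $G'$, and the pasted-triball space of a $(3+1)$-graph is a closed $3$-manifold \emph{only when} that graph is a gem; if $G'$ failed to be a gem there would be no induced manifold and no ``re-glued solid torus'' to invoke. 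The paper itself presents the Dehn--Lickorish interpretation only \emph{after} Proposition~\ref{prop:twisting}, as a consequence of it, not as an input. So step (iii), which you rightly identify as the substance, is not actually discharged.

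The fix is that your Euler bookkeeping, carried out exactly, already subsumes the ``fusion or handle'' worry, so no topological input is needed. For any $(3+1)$-graph one has $v+t=b+\sum_R g_R$, where $g_R$ is the genus of the closed surface obtained by capping the bigons of the $3$-residue $R$; hence it suffices to show that $v$, $b$ and $t$ are separately unchanged. Writing the twisting as one $i$-flip plus one $j$-flip at $\{u,v\}$ (Proposition~\ref{prop:planetwist}(a)) and using the twistor axioms, elementary cycle surgery gives: the \emph{shared} $0j$-gon and the \emph{shared} $ik$-gon each \emph{split} into two, the two \emph{distinct} $0i$-gons and the two \emph{distinct} $jk$-gons each \emph{merge} into one, and the $ij$- and $0k$-gon counts are untouched --- note this is finer than your phrasing that the shared gons ``become'' the new shared gons; the net change in $b$ is $+2-2=0$. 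For $t$: since $u$ and $v$ share the $0j$-gon and the $ik$-gon, all four $3$-residues through $u$ also contain $v$, so every modified edge lies in $3$-residues already containing both vertices and no two residues can fuse; and no affected residue can split, because after the move each of the four still owns a bigon through both $u$ and $v$ (the merged $0i$-gon for the $\widehat{j}$- and $\widehat{k}$-residues, the merged $jk$-gon for the $\widehat{0}$- and $\widehat{i}$-residues), while every other vertex reconnects to $\{u,v\}$ by rerouting along the new edges. Thus $v+t=b$ persists, all genera stay zero, $G'$ is a gem (bipartite by your step (ii)), and the same split/merge pattern exhibits $\{u,v\}$ as an $i$-twistor. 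The solid-torus picture should then be \emph{derived} from this proposition, not used to prove it.
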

\begin{proof}
See Proposition 2 of \cite{lins1997twistors}:
\end{proof}

In the dual the operation $ji$-twisting corresponds to 
a Dehn-Lickorish twist (or surgery) in which an embedded solid torus in the induced 3-manifold
is removed and glued back in a different way. It is a very enticing fact supporting gem theory
that the simple combinatorial move of $ji$-twisting defines in a mathematically precise way
exactly how to glue back the other solid torus. The solid tori involved are $\epsilon$-neighborhoods
of the hinges duals to the $j$-twistor and to the $i$-twistor.

Let $e,f$ be a pair of edges of color $c$ in a bipartite gem.
The \index{flipping} {\em $c$-flipping at $\{e,f\}$} is the operation that switch 
the even (or odd) $c$-neighbors of
the ends of $e$ and $f$. The resulting $(3+1)$-colored graph is not usually a gem. However, special
pairs of flips produce gems as the proposition below shows. 

A labelling of the vertices
of a gem is \index{$0$-consecutive} {\em $0$-consecutive} if the $n$ pairs of $0$-neighbors have the labelling
$(1,2),$ $(3,4),\ldots,(2n-1,2n)$. These labelling are important because with them the whole gem
is defined by its $\widehat{0}$-residues. Blobs and special 
flips can play the role of dipoles, \cite{lins2006blobs}.

\begin{proposition}
\label{prop:planetwist}
Let $G$ be a gem. $(a)$ A $ji$-twisting of a $j$-twistor of $G$ is factorable as
one $i$-flip and one $j$-flip.
$(b)$ If $G$ has a $0$-consecutive labelling on its vertices,
then a $ji$-twisting can be accomplished by one $k$-flip (which maintains planarity
of the $\widehat{0}$-residue) followed
by the $\{u,v\}$ label interchange. The final gem has a $0$-consecutive labelling
and so the $ji$-twisting is entirely depicted in the $\widehat{0}$-residue,
a plane graph.
\end{proposition}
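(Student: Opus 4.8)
The plan is to treat $(a)$ as an unwinding of the definitions and to derive $(b)$ from a single flip relation among the four colors.

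For $(a)$, I would first record the effect of a $c$-flip on the pair of $c$-edges incident to the twistor $\{u,v\}$. Since $u$ and $v$ have the same parity, their $c$-neighbors lie in the opposite class, so the prescription ``switch the even (or odd) $c$-neighbors of the ends'' is exactly the interchange of the $c$-neighbors of $u$ and $v$: the two $c$-edges $\{u,u_c\}$, $\{v,v_c\}$ are replaced by $\{u,v_c\}$, $\{v,u_c\}$. By the definition of $ji$-twisting, the operation interchanges simultaneously the $i$-neighbors and the $j$-neighbors of $u$ and $v$. As the $i$-edges and the $j$-edges at $u,v$ are disjoint, the $i$-flip and the $j$-flip at these edges commute, and their composite is precisely this double interchange. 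This gives $(a)$.

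For $(b)$, I would exploit that flipping all four colors at $\{u,v\}$ --- that is, interchanging the $c$-neighbors of $u$ and $v$ for every $c\in\{0,1,2,3\}$ --- makes $u$ adopt all the adjacencies of $v$ and conversely, hence equals the transposition $\sigma_{uv}$, an automorphism of the gem. Because flips of distinct colors commute and are involutions, this yields a relation of the form $(i\text{-flip})(j\text{-flip}) = (k\text{-flip})(0\text{-flip})\,\sigma_{uv}$, so by $(a)$ the $ji$-twisting is realized, up to the relabelling $\sigma_{uv}$, by a $k$-flip together with a $0$-flip. The hypothesis that the labelling is $0$-consecutive then makes the $0$-edges implicit: they are read off as the pairs $(2m-1,2m)$. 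Interchanging only the labels of $u$ and $v$ re-reads these pairs so that the $0$-neighbors of $u$ and $v$ are swapped, which is exactly the $0$-flip, while simultaneously recording the transposition $\sigma_{uv}$; thus the single label interchange accounts for both the $0$-flip and $\sigma_{uv}$. The only genuine move left in the drawing is the $k$-flip, performed among the colors $\{1,2,3\}=\{i,j,k\}$, i.e. inside the $\widehat{0}$-residue.

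It remains to check that this $k$-flip can be carried out inside the plane graph $\widehat{0}$-residue without destroying its planar embedding, and this is the step I expect to be the main obstacle. Here I would use the incidence data of a $j$-twistor: $u$ and $v$ lie in a common $ik$-gon, which bounds a face of the triball's spherical embedding, so reconnecting the $k$-edges of $u$ and $v$ across this shared bigon is a planar move and the $\widehat{0}$-residue stays plane. Finally, since only the labels of $u$ and $v$ are interchanged, every pair of $0$-neighbors is still labelled consecutively, so the output again carries a $0$-consecutive labelling and is therefore completely determined by, and drawn within, its $\widehat{0}$-residue. The delicate points to get right are the parity bookkeeping that guarantees the label interchange reproduces exactly the $0$-flip composed with $\sigma_{uv}$ (and not some other combination), and the precise placement of the two $k$-edges on the shared $ik$-gon that makes the flip planar.
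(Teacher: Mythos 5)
Your proof is correct, and its mathematical skeleton matches the paper's, but your route is genuinely different in how it establishes the facts: the paper proves both parts purely by inspection of Fig.~\ref{fig:jitwisting2} (the passage from $(A)$ to $(E)$ is one $i$-flip and one $j$-flip; from $(B)$ to $(C)$ a $k$-flip is performed; from $(C)$ to $(D)$ the $uv$-label interchange repairs the gem and the labelling), whereas you derive part $(b)$ algebraically from the relation that the four flips at $\{u,v\}$ compose to the relabelling $\sigma_{uv}$ (valid because $u,v$ have the same parity, hence are non-adjacent in the bipartite gem), giving $(i\hbox{-flip})(j\hbox{-flip})=(k\hbox{-flip})(0\hbox{-flip})\,\sigma_{uv}$, together with the observation that under a $0$-consecutive labelling the single label interchange performs the $0$-flip and $\sigma_{uv}$ at once. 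That observation is exactly what the paper relegates to the caption of Fig.~\ref{fig:jitwisting2} (``when we change the labels, we are implicitly interchanging the 0-colored edges involved''); your version buys figure-independence and isolates the one nontrivial identity, at the cost of being longer than the paper's two-line check. Two details to tighten. First, $\sigma_{uv}$ is not an automorphism of the gem; it is a relabelling, i.e.\ a color-preserving isomorphism from the original labelled graph onto the four-times-flipped one --- this is all your argument needs. Second, planarity of the $k$-flip requires more than the existence of the shared $ik$-gon face: one must know the cyclic order of $u,u_k,v,v_k$ around that face, since two chords in a common face can still cross. Here parity settles it: the $ik$-gon alternates parities, and since $u,v$ are both of the same parity the order around the gon is $u, u_k, \ldots, v_i, v, v_k, \ldots, u_i$, so the new chords $u$--$v_k$ and $v$--$u_k$ can be drawn in the face without crossing (had the order been $u, u_k, \ldots, v_k, v, \ldots$ they would cross). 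This is precisely the ``delicate point'' you flagged; the parity bookkeeping completes it.
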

\begin{proof}
 We refer to Fig. \ref{fig:jitwisting2}. It is obvious that the passage from $(A)$ to $(E)$
is attainable by one $i$-flip and one $j$-flip, thus proving part $(a)$. Note that $A$ and $B$
are the same configuration. The same is true for $D$ and $E$. From $B$ to $C$ a $k$-flip
is performed and $C$ is no longer a gem nor it has a $0$-consecutive labelling. This
is easily fixed by the interchange of the labels $u$ and $v$, which produces $D$ and establishes $(b)$.
\begin{figure}[!htb]
\begin{center}
\includegraphics[width=12cm]{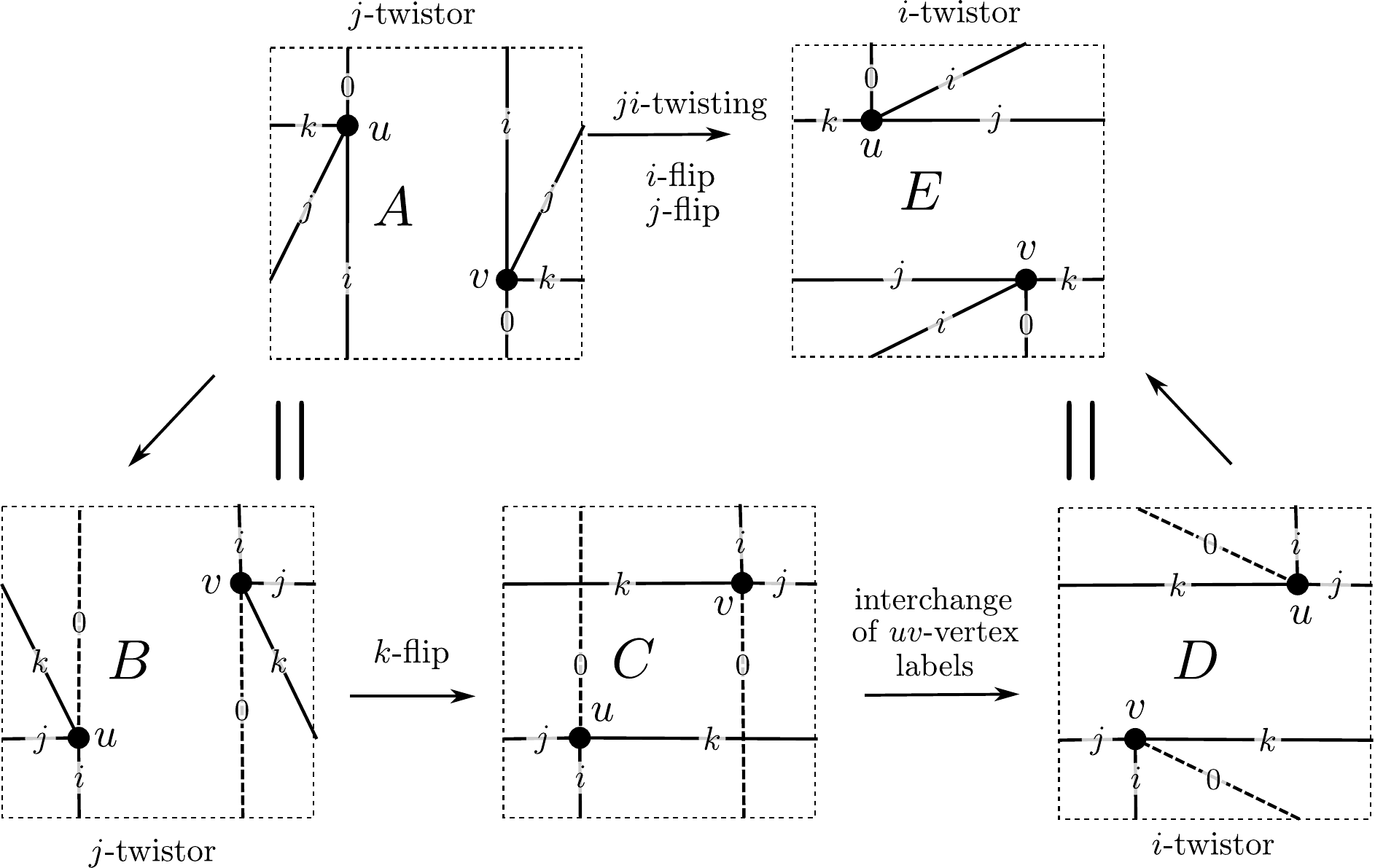} \\
\caption{\sf Factoring a $ji$-twisting by a $k$-flip 
and $uv$-label interchange (in the $\widehat{0}$-residue). 
Observe that when we change the labels, we are implicitly interchanging the 0-colored edges involved.}
\label{fig:jitwisting2}
\end{center}
\end{figure}
\end{proof}

A {\em crystallization} \index{crystallization} is a gem without 1-dipoles. 
Since $ji$-twisting does not disturb the number of 
3-residues, the Proposition \ref{prop:twisting} holds with crystallization in the place of gem. 
Henceforth we work only with crystallizations. Denote by $b_{ij}(C)$ the number of
$ij$-gons of $C$. The following proposition is implicitly used throughout this work.
\begin{proposition}
 In any crystallization $C$ the number of color complementary bigons are the same, i. e.,
 $b_{01}(C)=b_{23}(C), 
b_{02}(C)=b_{13}(C)$ and $b_{03}(C)=b_{12}(C)$.
\label{prop:complementary} 
\end{proposition}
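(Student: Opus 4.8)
The plan is to reduce the three identities to the manifold (sphericity) condition through Euler's formula, after first using the crystallization hypothesis to fix the number of $3$-residues of each type. First I would show that a crystallization is \emph{contracted}, i.e. that $t_h=1$ for every $h$. Since the $c$-colored edges form a perfect matching, a single $c$-edge is a $\{c\}$-residue on two vertices, and it is a $1$-dipole exactly when its two endpoints lie in distinct $\widehat{\{c\}}$-residues. Consider the multigraph $Q_c$ whose vertices are the $\widehat{\{c\}}$-residues and whose edges are the $c$-edges of $C$; it is connected because $C$ is. The absence of $1$-dipoles says every edge of $Q_c$ is a loop, so $Q_c$ has a single vertex, i.e. $t_c=1$.

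Next I would turn the sphericity of the triballs into linear relations among the $b_{ij}$. Each $\widehat{\{h\}}$-residue is $3$-regular and, by the gem condition, embeds in $\mathbb{S}^2$ with its bigons as faces, so $V-E+F=2$ holds on it. Summing over the $t_h$ residues of type $\widehat{\{h\}}$ and using that they partition the $v=v(C)$ vertices, that the three colors $\neq h$ contribute $3v/2$ edges, and that each bigon $\{a,b\}$ with $h\notin\{a,b\}$ lies in exactly one of them, I get
\[
-\frac{v}{2}+\sum_{h\notin\{a,b\}} b_{ab}(C)=2t_h .
\]
With $t_h=1$ the four sums $S_h:=\sum_{h\notin\{a,b\}} b_{ab}(C)$ are all equal (to $2+v/2$).

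Finally I would solve the system. Writing $x=b_{01}-b_{23}$, $y=b_{02}-b_{13}$, $z=b_{03}-b_{12}$, the equalities $S_0=S_1$, $S_0=S_2$, $S_0=S_3$ become $y+z=0$, $x+z=0$, $x+y=0$; adding these gives $x+y+z=0$ and hence $x=y=z=0$, which are precisely the asserted identities. The only delicate step is the first one, namely that the absence of $1$-dipoles forces $t_h=1$ (contractedness); everything after that is routine Euler bookkeeping and a trivial linear solve. A dual viewpoint is also available — in the colored triangulation dual to $C$ the number $b_{ij}$ counts the edges joining the two vertices of complementary colors $\widehat{\{i,j\}}$, so the statement becomes a balancing condition among the four vertices — but it rests on the same sphericity input, so I would keep the direct computation.
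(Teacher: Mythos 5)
Your proof is correct, and it is genuinely more informative than what the paper offers: the paper does not prove Proposition \ref{prop:complementary} at all, it simply defers to the reference \cite{lins1986paintings}. Your three steps all check out. First, the reduction of ``no $1$-dipoles'' to contractedness ($t_c=1$ for every color $c$) via the quotient multigraph $Q_c$ is sound: every $\{c\}$-residue is a single $c$-edge on two vertices, the no-$1$-dipole hypothesis makes every edge of $Q_c$ a loop, and a connected multigraph all of whose edges are loops has one vertex. Second, the Euler count is right: each $\widehat{\{h\}}$-residue is cubic and, by the gem condition as stated in the paper, embeds in $\mathbb{S}^2$ with the bigons as faces, so summing $V-E+F=2$ over the $t_h$ residues gives $-v/2+S_h=2t_h$, and contractedness makes all four sums $S_h$ equal. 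Third, the linear system $y+z=x+z=x+y=0$ indeed forces $x=y=z=0$. It is worth noting that your Euler-characteristic bookkeeping is exactly the technique the paper itself uses later, in the proof of Proposition \ref{prop:abovecoloring}, where the four sphere Euler formulas are added to get $v(C)+4=b(C)$ for a $J^2$-gem; so your argument is very much in the spirit of the paper, it just supplies the counting that the paper outsources. The one hypothesis you should be explicit about is that $C$ is a gem (not merely a $(3+1)$-graph), since the planarity of the $\widehat{\{h\}}$-residues is what licenses the Euler formula; you do invoke ``the gem condition,'' so this is a matter of emphasis rather than a gap.
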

\begin{proof}
 See \cite{lins1986paintings}.
\end{proof}

\subsection{The gray graph of a crystallization and resoluble gems}

For $i \in \{1,2,3\}$ the {\em $i$-gray graph} \index{gray graph} $T_i(C)$ of a crystallization $C$ is defined as follows.
The edge set of $T_i(C)$ are in 1-1 correspondence with the set of
$j$- and $k$- twistors and antipoles of $C$.
The set of vertices of $T_i(C)$ are in 1-1 correspondence with the the set of $jk$-gons of $C$.
The incidence relation between vertices and edges is specified 
in the next Proposition, \ref{prop:thegrayedgee}.
It is convenient to present $T_i(C)$ as a {\em gray graph} 
over a planar (black) drawing of the 
$\widehat{i}$-residue of $C$ 
as in the example in the left side of Fig. \ref{fig:r24_5res}, which is the 5th.\,{\em rigid gem} 
with 24 vertices inducing the 3-manifold EUCLID$_1$,
\cite{lins1995gca}. In fact, we only depict a spanning tree of $T_i(C)$.
Crystallization $r^{24}_5$ with $(i,j,k)=(1,2,3)$
is used throughout the paper to illustrate all the ideas and constructions
involved. Each vertex is inside the disk bounded by the bigon it corresponds to. 
Each edge is labelled by 3 integers in the format 
$t\hspace{-1.2mm}:\hspace{-1.2mm}u$-$v$.
Label $t\in\{j,k\}$ at edge $e$ of $T_i(C)$ means that the corresponding twistor is a $t$-twistor;
$u$ and $v$ are the labels of the vertices defining the twistor. See next proposition to 
conclude the definition of $T_i(C)$. By creating an adequate 2-dipole
each antipole produces a twistor of the same type so that the corresponding edges in the
gray graph have the same ends.

\begin{figure}[!htb]
\begin{center}
\includegraphics[width=15cm]{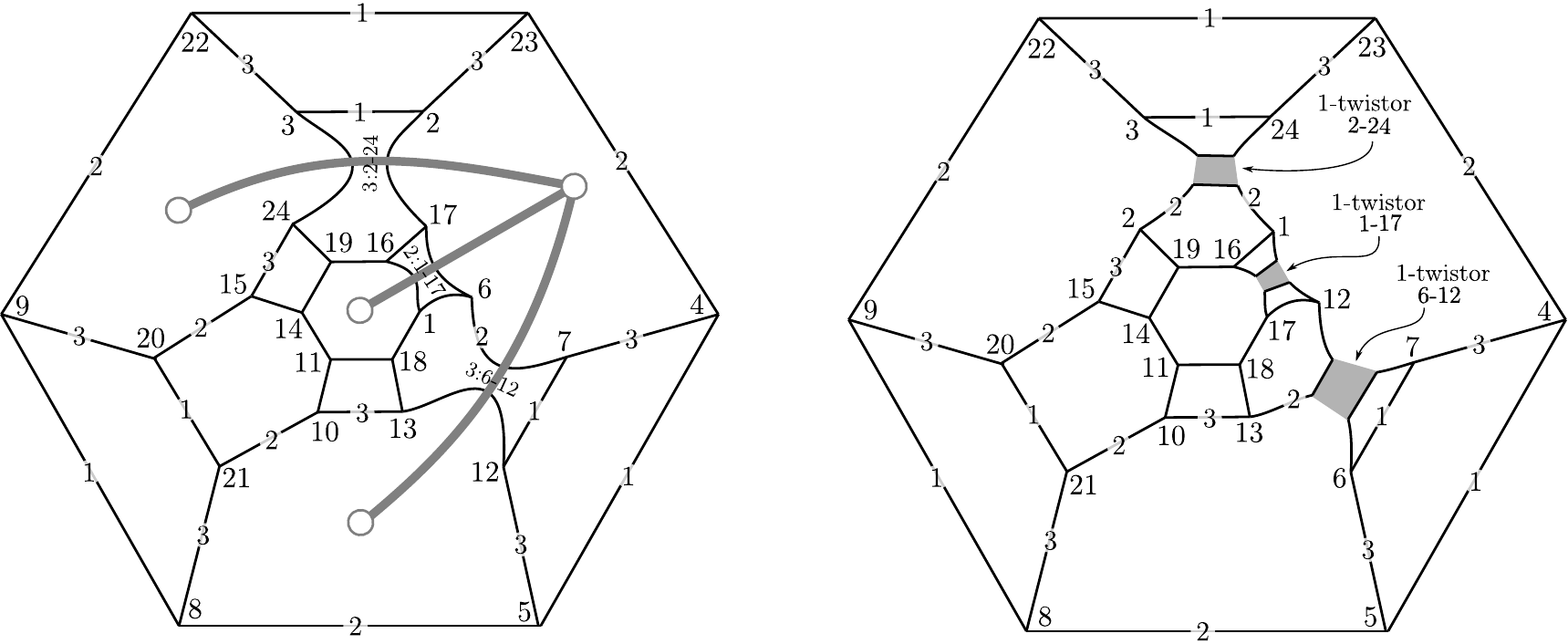} 
\caption{\sf $1$-resolution for $r_5^{24}$ and its associated $J^2$-gem  $(r_5^{24})'$ (with
a 0-consecutive labelling maintained).} 
\label{fig:r24_5res}
\end{center}
\end{figure}

\begin{proposition}\label{prop:thegrayedgee}
 The gray edge $e$ of $T_i(C)$ corresponding to a $j$-twistor (respec. $k$-twistor) 
crosses two $k$-colored (respec. $j$-colored) 
edges of the two distinct $jk$-gons: the one incident to $u$ and the one
incident to $v$. The same is true with antipole in the place of twistor.
Moreover, $e$ has no other crossings with
the $\widehat{i}$-residue of $C$.
\end{proposition}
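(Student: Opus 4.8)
The plan is to reduce the whole statement to a purely local, planar computation inside the triball $\widehat{i}$-residue, which by the gem hypothesis embeds in $\mathbb{S}^2$ with every face bounded by a single bigon. First I would fix the local model at a vertex $x$ of this residue: $x$ meets exactly three edges, coloured $0,j,k$, and lies on exactly three faces, one of each type $0j$-, $jk$- and $0k$-gon, arranged so that two consecutive edges in the rotation at $x$ bound the bigon of their two colours. In particular the $jk$-gon $F_x$ containing $x$ meets $x$ along one $j$-edge and one $k$-edge, and across a $c$-coloured boundary edge of a $jk$-gon lies a $0c$-gon. Since the vertices of $T_i(C)$ are placed inside the $jk$-gon disks (as in Fig.\ \ref{fig:r24_5res}), the two endpoints of the gray edge attached to a $j$-twistor $\{u,v\}$ are the vertices sitting in $F_u$ and in $F_v$.

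Next I would extract, from the defining incidences of a $j$-twistor, the single face of the residue through which the arc must pass. By definition $u$ and $v$ share their $0j$- and $ik$-gons and lie in distinct $0i$-, $0k$-, $ij$- and $jk$-gons; discarding the gons that involve the colour $i$, which are invisible in the $\widehat{i}$-residue, leaves exactly one shared face of the residue, while $F_u\neq F_v$. This pins down a length-two path $F_u,\,\ast,\,F_v$ in the dual graph of the residue through that unique common face, whose two dual steps are precisely the boundary edges of $F_u$ and of $F_v$ incident to $u$ and to $v$ (of the colour dictated by that common face, that is the two $k$-coloured edges for a $j$-twistor and the two $j$-coloured edges for a $k$-twistor). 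I would then draw the gray arc along this geodesic and simply read off that it meets the $1$-skeleton exactly at these two edges.

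The main work, and the only genuine obstacle, is the clause asserting that there are no further crossings. For this I would argue that the arc can be homotoped rel endpoints inside $\mathbb{S}^2$ to the dual geodesic just described, so that its number of transverse intersections with the $1$-skeleton equals the dual distance between $F_u$ and $F_v$; the crux is then to show this distance is exactly two and is realised only through the common face singled out above. Here planarity of the triball together with the precise same/distinct pattern of the twistor is indispensable: the shared incidence supplies one common neighbouring face, while the four distinct incidences rule out any competing length-two route and any accidental tangency, thereby pinning the two crossings to $u$ and to $v$. Finally, for an antipole I would invoke that creating a suitable $2$-dipole turns it into a twistor of the same type with the same pair of $jk$-gon endpoints, an internal move by Proposition \ref{prop:twogemssame}; since this creation modifies the residue only away from the relevant faces, the crossing pattern is preserved and the twistor case applies verbatim.
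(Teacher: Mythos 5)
Your core idea --- locate the unique face of the ambient planar residue shared by $u$ and $v$, route the gray arc through it, and read off the two crossings --- is indeed what the paper's terse ``straightforward from the definitions'' amounts to. But your execution breaks at the decisive step, the colour identification, and the break is genuine. Working, as you explicitly do, over the $\widehat{i}$-residue (colours $0,j,k$), you correctly find that for a $j$-twistor the unique shared face is the $0j$-gon. However, in that residue a $j$-edge separates the $jk$-gon from the $0j$-gon through it, a $k$-edge separates the $jk$-gon from the $0k$-gon, and a $0$-edge separates the $0j$-gon from the $0k$-gon (you state this face-pairing correctly in your first paragraph). Hence an arc passing from $F_u$ into the shared $0j$-gon must cross the \emph{$j$-edge} at $u$, not the $k$-edge; your parenthetical ``the colour dictated by that common face, that is the two $k$-coloured edges for a $j$-twistor'' is a non sequitur that contradicts your own setup. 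Worse, on this canvas the asserted statement cannot be repaired: crossing the $k$-edge at $u$ lands the arc in the $0k$-gon of $u$, which by the definition of a $j$-twistor is \emph{distinct} from the $0k$-gon of $v$, so no arc crossing only the two $k$-edges at $u$ and $v$ can exist at all. So what your argument, carried out honestly, proves is the colour-swapped statement ($j$-twistor crossing two $j$-edges), not the statement as printed.

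The way to reconcile the printed colours with the printed definition of twistor is to draw $T_i(C)$ over the $\widehat{0}$-residue (colours $i,j,k$), whose faces are the $ij$-, $ik$- and $jk$-gons: there the unique face shared by a $j$-twistor pair is its common $ik$-gon, and the entry/exit edges from $F_u$ and $F_v$ into that face are precisely the $k$-edges at $u$ and at $v$ (respectively the $j$-edges, through the shared $ij$-gon, for a $k$-twistor). That this is the intended canvas is confirmed by Proposition \ref{prop:planetwist}(b) and Corollary \ref{cor:themanifestation}, where the $ji$-twisting is realised by a $k$-flip of exactly the crossed pair and planarity of the $\widehat{0}$-residue is what is preserved, and by the $0$-consecutive labellings in Fig.~\ref{fig:r24_5res}; the occurrences of ``$\widehat{i}$-residue'' in the paper's wording of the proposition are slips, which you inherited and then papered over by asserting the printed colours at the key moment. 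Two smaller defects: your third paragraph's homotopy/``dual distance exactly two'' discussion is neither needed nor true in general --- the twistor conditions do not forbid $F_u$ and $F_v$ from sharing an edge or from having other common neighbouring faces; the proposition only requires exhibiting a drawing of $e$ with the two stated crossings, which is immediate once the correct shared face is named. And the antipole case needs no $2$-dipole detour and no appeal to Proposition \ref{prop:twogemssame}: an antipole has exactly the same same/distinct bigon pattern as the corresponding twistor, only with opposite parities, and parity plays no role anywhere in the shared-face argument.
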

\begin{proof}
 Straightforward from the definitions.
\end{proof}

The two $k$-colored edges of $C$ which are crossed by a gray edge $e$ of $T_i(C)$ corresponding to
a $j$-twistor form an {\em $e$-pair}. \index{$e$-pair}

\begin{corollary}\label{cor:themanifestation}
 The manifestation in $T_i(C)$ of the 
$ji$-twisting of a $j$-twistor $e$ with vertices $u,v$ is simply the $k$-flip of
the $e$-pair, followed by the interchange of labels $u$ and $v$.
\end{corollary}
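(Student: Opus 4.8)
The plan is to derive the statement as a direct translation of Proposition~\ref{prop:planetwist}(b) into the gray-graph picture, using Proposition~\ref{prop:thegrayedgee} to identify the pair of $k$-colored edges that gets flipped.

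First I would invoke Proposition~\ref{prop:planetwist}(b). Since the crystallizations we work with carry a $0$-consecutive labelling (as in the example of Fig.~\ref{fig:r24_5res}), the $ji$-twisting of the $j$-twistor $\{u,v\}$ is already known to be accomplished by a single $k$-flip followed by the $\{u,v\}$ label interchange. Hence the only remaining tasks are to determine \emph{which} pair of $k$-colored edges is flipped and to recognize that pair as the $e$-pair.

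Next I would pin down the flipped edges. Because the graph is $4$-regular with exactly one edge of each color at every vertex, each of $u$ and $v$ is incident to a unique $k$-colored edge, say $(u,u_k)$ and $(v,v_k)$. Tracking the passage $B\to C$ in Fig.~\ref{fig:jitwisting2}, the $k$-flip supplied by the factorization reconnects precisely these two $k$-edges crosswise (switching the odd, resp.\ even, $k$-neighbors $u_k$ and $v_k$) and touches no other $k$-edge. Thus the flip of Proposition~\ref{prop:planetwist}(b) is the $k$-flip of $\{(u,u_k),(v,v_k)\}$. To finish, I would match this pair with the $e$-pair: by the definition of a $j$-twistor, $u$ and $v$ lie in two distinct $jk$-gons, and by Proposition~\ref{prop:thegrayedgee} the gray edge $e$ representing $\{u,v\}$ crosses exactly the two $k$-colored edges of those $jk$-gons incident to $u$ and to $v$, namely $(u,u_k)$ and $(v,v_k)$, with no other crossing of the $\widehat{i}$-residue. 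By definition these are the $e$-pair. Since the $\{u,v\}$ label interchange carries over verbatim, the $ji$-twisting manifests in $T_i(C)$ exactly as the $k$-flip of the $e$-pair followed by the $\{u,v\}$ label interchange.

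The step I expect to be the main obstacle is the second one: reconciling the two planar pictures, since Proposition~\ref{prop:planetwist}(b) is phrased over the $\widehat{0}$-residue while $T_i(C)$ is drawn over the $\widehat{i}$-residue. One must check that the $k$-flip native to the $\widehat{0}$-residue and the $k$-flip of the $e$-pair in the $\widehat{i}$-residue act on the same two $k$-edges and reconnect them the same way. This reduces to the observation that the incident $k$-edge of a vertex is intrinsic, independent of the residue chosen, so both descriptions single out $(u,u_k)$ and $(v,v_k)$; the crosswise reconnection then agrees because in both pictures it is dictated by the common $0j$- and $ik$-gons shared by the $j$-twistor.
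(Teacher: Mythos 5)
Your proposal is correct and follows exactly the paper's route: the paper proves this corollary with the single line ``Straightforward from Proposition~\ref{prop:planetwist}'', relying on part~(b) just as you do. Your additional work---identifying the flipped pair as the unique $k$-edges at $u$ and $v$ via Proposition~\ref{prop:thegrayedgee} and noting that a $k$-flip is an operation on the abstract colored graph, hence independent of whether one draws the $\widehat{0}$- or the $\widehat{i}$-residue---is precisely the routine verification the paper leaves implicit.
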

\begin{proof}
 Straightforward from Proposition \ref{prop:planetwist}.
\end{proof}

In general, $T_i(C)$ cannot be depicted free of crossings among its (gray) edges.
A subset of $n$ twistors is {\em disjoint} if the union of their pairs 
of vertices has cardinality $2n$.
An {\em $i$-resolution} for a crystallization $C$ is a disjoint 
subset of the $j$- and $k$-twistors
inducing an spanning tree of $T(C)$ which is free of crossings. In the left side of
Fig. \ref{fig:r24_5res} we depict
a 1-resolution for $r^{24}_5$. A gem is {\em resoluble} 
\index{resoluble gem} if it is a crystallization 
and admits an $i$-resolution for some $i\in\{1,2,3\}$.

\begin{conjecture}
Any crystallization free of dipoles, $\rho$-pairs and 4-clusters has 
an $i$-resolution for some $i \in \{ 1, 2, 3 \}.$
\end{conjecture}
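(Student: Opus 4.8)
The plan is to argue by contraposition: assume $C$ is a crystallization admitting no $i$-resolution for any $i\in\{1,2,3\}$, and try to extract from this failure one of the three forbidden configurations — a dipole, a $\rho$-pair, or a $4$-cluster. Thus the real content is a \emph{structure theorem} asserting that these three local configurations are the \emph{only} obstructions to resolvability. Before reaching that point one must decompose the notion of $i$-resolution into its three constituent demands on the gray graph $T_i(C)$: (1) that the $j$- and $k$-twistors/antipoles connect all the $jk$-gons, so that $T_i(C)$ is connected and a spanning tree exists; (2) that the tree edges can be selected from a \emph{disjoint} family of twistors, their $2(b_{jk}(C)-1)$ vertices being distinct; and (3) that the chosen tree embeds over the planar $\widehat{i}$-residue with no gray self-crossings. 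I would secure (1) and the cardinality side of (2) first by counting, and confront the realizability of (2), together with (3) and the final translation to local configurations, last.

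For (1) and (2), I would first observe via Proposition~\ref{prop:complementary} that the vertices of $T_i(C)$ — the $jk$-gons — number $b_{jk}(C)=b_{0i}(C)$, and that since the $0i$-gons partition the $2n$ vertices of $C$ into blocks of size at least two, $b_{0i}(C)\le n$. Hence a spanning tree needs only $b_{jk}(C)-1\le n-1$ edges, and the $2(b_{jk}(C)-1)\le 2n-2$ vertices they demand always fit: the cardinality obstruction to disjointness never arises. The genuine content of (2) is therefore combinatorial realizability — that one can actually pick disjoint twistors whose vertex-pairs form a tree — which I would attack by an augmenting-path argument seeking a spanning tree of $T_i(C)$ whose edges form an independent (vertex-disjoint) family. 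Connectivity (1) I would reduce to showing that any partition of the $jk$-gons into two blocks joined by no twistor forces an embedded sphere meeting $C$ in a configuration that is precisely a dipole or a $\rho$-pair; this is where the excluded configurations first do their work.

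The crossing-free requirement (3) I would address through the planar embedding of the $\widehat{i}$-residue. By Proposition~\ref{prop:thegrayedgee} each gray edge meets the black residue exactly in the two edges of its $e$-pair, so a gray edge is a chord joining two prescribed faces of a plane graph, and a crossing-free tree is exactly an outerplanar-type routing of these chords. Here I would set up an uncrossing exchange: given a spanning tree with two crossing gray edges, swap one for a twistor or antipole realizing the same cut but routed on the opposite side, using Corollary~\ref{cor:themanifestation} to track how a $ji$-twisting (a $k$-flip plus label interchange) rearranges the $e$-pairs. Iterating such moves should terminate at a planar tree unless the crossings are ``knotted'' in a way no single swap undoes.

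The hard part — and the reason this has remained a conjecture rather than a theorem — is precisely the last step: showing that the realizability of (2) and the crossing-freeness of (3) can be achieved \emph{simultaneously}, and that when they cannot, the obstruction is genuinely local, namely a $4$-cluster rather than some diffuse global tangle. Each uncrossing move for (3) may destroy the disjointness secured in (2), and each re-selection for (2) may reintroduce crossings, so the two exchange processes must be shown to converge \emph{together}. Controlling this coupled dynamics, and proving that its only fixed-point obstructions are dipoles, $\rho$-pairs and $4$-clusters, is the crux on which the whole argument stands or falls; I suspect it is exactly here that the ``new ideas and recent strong results'' alluded to in the introduction would be needed.
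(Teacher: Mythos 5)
You have not produced a proof, and no comparison with ``the paper's own proof'' is possible for a simple reason: this statement is stated in the paper as a \emph{conjecture}, explicitly described as ``the most important open problem in gem theory,'' for which the authors say a proof ``has, so far, resisted many attempts'' and may even require new results in 3-manifold topology. Your proposal, honestly, concedes the same thing: the entire weight of the argument rests on the final ``structure theorem'' that dipoles, $\rho$-pairs and 4-clusters are the \emph{only} obstructions to resolvability, and on the convergence of the coupled exchange dynamics for disjointness and crossing-freeness --- and for neither of these do you offer an argument, only the statement that this is ``the crux on which the whole argument stands or falls.'' A proof outline whose decisive step is flagged as unresolved is a research program, not a proof, so the proposal has a genuine gap --- indeed the same gap that has kept the statement conjectural.

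Beyond the admitted gap, two concrete steps in your sketch would fail as written. First, your ``uncrossing exchange'' invokes Corollary~\ref{cor:themanifestation}, but that corollary describes how a $ji$-twisting \emph{modifies the gem} $C$ (in the dual, it is a Dehn--Lickorish surgery on $|C|$): performing it changes the crystallization and hence the gray graph itself, so it cannot serve as a re-routing move for gray edges within $T_i(C)$ of a \emph{fixed} crystallization; selecting a different spanning tree and twisting the gem are operations on different objects, and conflating them invalidates the termination argument. Second, your reduction of connectivity of $T_i(C)$ to ``a partition of the $jk$-gons joined by no twistor forces a dipole or $\rho$-pair'' is asserted without any mechanism: nothing in Propositions~\ref{prop:thegrayedgee} or \ref{prop:complementary} produces an embedded sphere or a local forbidden configuration from such a partition, and the paper's own empirical remarks (resolutions ``appear'' as the gem is simplified) suggest the authors know of no such local certificate either. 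The counting observations you make via Proposition~\ref{prop:complementary} are correct but only dispose of the trivial cardinality issue; everything of substance remains open.
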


Empirically we have witnessed the fact that once we simplify the gem 
its gray graphs becomes richer and richer and a lot of resolutions appear.
However a proof of the fact has, so far, resisted many
attempts.
In view of the results presented in this thesis 
we consider the above conjecture
{\em the most important open problem in gem theory}.
The resolution of the gem in Fig. \ref{fig:resolutionDhip50A} 
provides, with the theory to be
here developed, a surgery presentation
of the dodecahedral hyperbolic space, answering Jeffrey Weeks' specific question.
In the picture, for $i \in \{2,3\}$, a gray edge corresponds to an $i$-twistor if it
crosses two (5-$i$)-colored edges. 
\begin{figure}[!htb]
\begin{center}
\includegraphics[width=14.5cm]{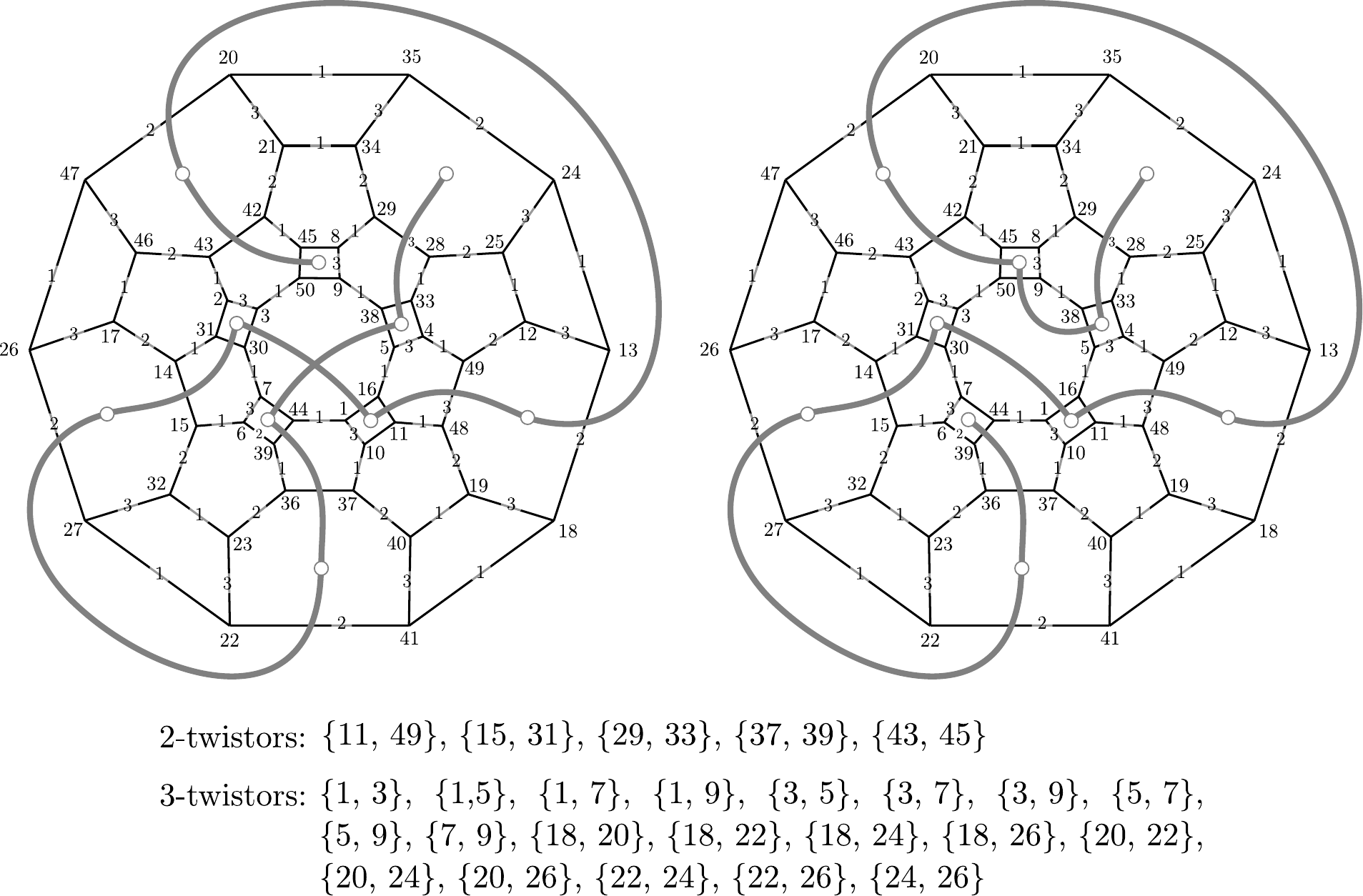} \\
\caption{\sf This is a 50-vertex gem which behaves as the {\em attractor}
(see \cite{lins1995gca})
for the Weber-Seifert dodecahedral hyperbolic space.
For a proof that it induces this space see also \cite{lins1995gca}.} 
\label{fig:resolutionDhip50A}
\end{center}
\end{figure}

On the left side of Fig. \ref{fig:resolutionDhip50A} we do not have a resolution because of the crossing
in the spanning tree. On the right we do have 1-resolution for the space:
a crossing free spanning tree in the 1-gray graph of a gem inducing it.
This makes the gem a resoluble one. 
We also list all of its 2- and 3-twistors.
The gem is used to illustrate our algorithm
yielding a link with 9 components (corresponding to the 9 edges of 
the spanning tree
in the resolution). The output is a link having 68 1-simplices 
in its PL-embedding in $\mathbb{R}^3$. We selected a projection
with 142 crossings specified in Appendix B. 
The link is in a raw state and most likely can be substantially simplifed
by Reidemeister moves.

\subsection{Pattern of intersection of two Jordan curves: $J^2$-gems}
Let $X$ and $Y$ be two Jordan curves in the plane that intersect transversally at $2n$ points.
We show that $X \cup Y$ defines naturally a gem inducing $S^3$. Gems obtained in this way are 
crystallizations and induce $S^3$. They are called {\em $J^2$-gems}. \index{$J^2$-gem}
This class of gems play a central role in our
theory because we can PL-embed their duals in $\mathbb{R}^3$ by a polynomial algorithm.
Moreover, because they are the final resulting gems of adequate twistings of the
twistors in a resolution.

To color the segments induced by the crossings of $X$ and $Y$ 
paint the segments of $X$ alternatively with colors $j$ and $k$. Next, paint the segments
of $Y$ that go inside $X$ with color 0 and the segments which go outside $X$ with color $i$.

\begin{figure}[!htb]
\begin{center}
\includegraphics[width=15cm]{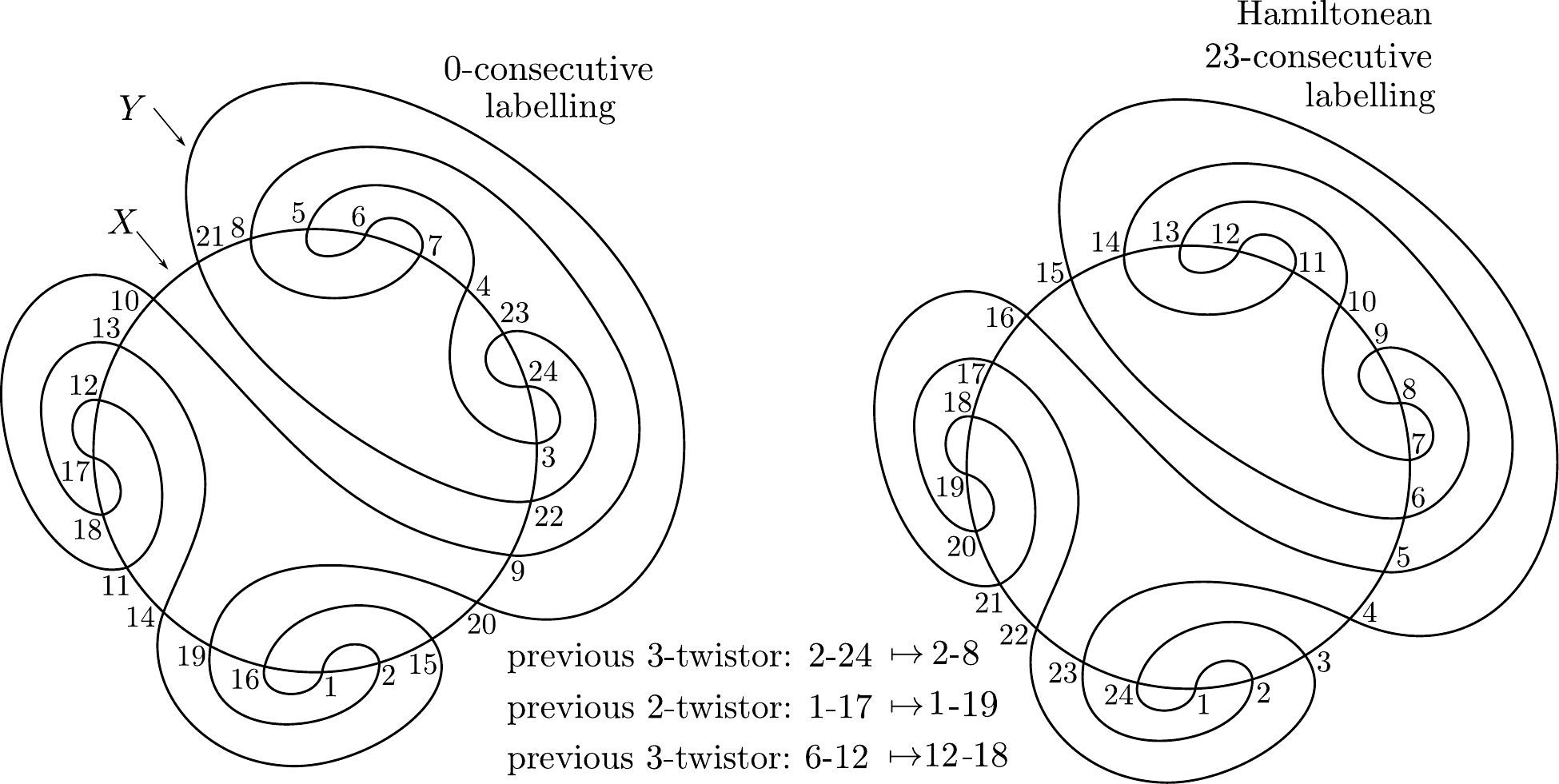} \\
\caption{\sf The $J^2$-gem obtained from the 1-resolution of $r^{24}_5$ and its
hamiltonean 23-consecutive labelling.}
\label{fig:j2gemr245}
\end{center}
\end{figure}

\begin{proposition}
\label{prop:abovecoloring}
 The above coloring scheme yields a crystallization $C=X\cup Y$ inducing $S^3$.
Thus each $J^2$-gem induces $S^3$.
\end{proposition}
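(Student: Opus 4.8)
The plan is to show two things: first that the coloring scheme described produces a genuine $3$-gem (equivalently, a crystallization), and second that the induced $3$-manifold $|C|$ is $S^3$.

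\medskip

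\noindent\textbf{Step 1: Verifying the gem condition.} First I would check that $C = X \cup Y$, with the prescribed coloring, is a $(3+1)$-graph: each of the $2n$ crossing points of $X$ and $Y$ becomes a degree-$4$ vertex, and I must confirm that the four incident edges carry the four distinct colors $\{0,1,2,3\}$. At a crossing, two segments of $X$ (colored alternately $j,k$) and two segments of $Y$ meet; since along $X$ the colors alternate $j,k$, every crossing is incident to exactly one $j$-edge and one $k$-edge of $X$. For the two $Y$-segments at the crossing, one heads into the interior of $X$ (color $0$) and one heads outside (color $i$) — this is exactly where transversality and the Jordan curve theorem are used: each arc of $Y$ between consecutive intersection points with $X$ lies entirely inside or entirely outside $X$, and the inside/outside status alternates as we traverse $Y$. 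Thus at each vertex the four colors $\{0,i,j,k\}=\{0,1,2,3\}$ appear exactly once, giving a $(3+1)$-graph. I would then verify the defining relation $v(C)+t(C)=b(C)$, or equivalently argue directly that every $\{a,b,c\}$-residue embeds in $\S^2$ with faces bounded by bigons; the planarity of $X\cup Y$ in the plane (hence in $\S^2$) makes this transparent for the residues involving the planar structure.

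\medskip

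\noindent\textbf{Step 2: Identifying the manifold as $S^3$.} The natural approach is to realize $|C|$ directly as a gluing of two balls along $\S^2$, which is the standard Heegaard-genus-$0$ description of $S^3$. Here I would exploit the color assignment: the $0$-colored and $i$-colored segments of $Y$ separate the picture according to ``inside $X$'' versus ``outside $X$.'' The two complementary regions of the plane (compactified to $\S^2$) determined by the Jordan curve $X$ give two disks whose thickened versions furnish the two handlebodies; the curve $Y$ prescribes how the boundaries are identified, and the alternating color pattern guarantees the gluing is the trivial (genus-$0$) one. Concretely I would pair up the triballs of $C$ and show, using the pasting procedure of Section 2.1, that the resulting identification space is two $3$-balls glued along their entire boundary $2$-sphere, which is $S^3$.

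\medskip

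\noindent\textbf{Main obstacle.} The routine part is the local check at each vertex in Step~1. The genuinely delicate point will be Step~2: making precise and rigorous the claim that the particular color pattern forces the gluing to be the \emph{standard} $\S^2$-identification yielding $S^3$, rather than some other lens-space-like gluing. The alternation of inside/outside arcs of $Y$ is what pins down the gluing, and I expect the cleanest route is an explicit induction on $n$: when $X$ and $Y$ meet at $2$ points the configuration is elementary and gives $S^3$, and each additional pair of intersection points can be absorbed by a dipole cancellation (Proposition~\ref{prop:twogemssame}), which preserves the induced manifold. This reduces the general case to the base case and sidesteps having to describe the global gluing explicitly.
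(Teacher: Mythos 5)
Your Step 1 is essentially the first half of the paper's own proof (the paper does it by noting that deleting any one color class leaves a connected graph, so $C$ has exactly four $3$-residues, and then summing the Euler characteristic formulas of the four planar residues to get $v(C)+4=b(C)$), so that part is sound, though you should also note that $X$ and $Y$ being single Jordan curves gives $b_{jk}(C)=b_{0i}(C)=1$. The genuine gap is in Step 2, in the inductive fallback you rely on. Your induction step asserts that each extra pair of intersection points "can be absorbed by a dipole cancellation." The pair of parallel edges produced by an innermost bigon of $X\cup Y$ does form a $2$-vertex $\{a,b\}$-residue, but that alone does not make it a $2$-dipole: the dipole condition requires the two vertices to lie in \emph{distinct} $\widehat{\{a,b\}}$-residues, a global condition that can fail. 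The paper warns about exactly this right after the proposition: in the $8$-vertex attractor for $S^2\times S^1$ there is a pair $G[u,v]$ with two edges that is not a $2$-dipole, and cancelling/fusing it changes the manifold. Worse, the statement that such a pair \emph{is} always a $2$-dipole when the gem induces $S^3$ is precisely Corollary \ref{cor:ifginducess3}, which the paper deduces from the proposition you are trying to prove; invoking it inside your induction is circular.

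The paper's proof repairs this step with two ingredients your proposal lacks. First, a combinatorial fundamental group computation: since $C$ is a crystallization with $b_{0i}(C)=1$, the group $\pi_1(|C|)$ is generated by $b_{0i}-1=0$ elements by \cite{lins1988fundamental}, hence trivial. (With this in hand one could already quote the Poincar\'e conjecture and stop, as the paper observes.) Second, to avoid Poincar\'e, the paper takes a minimal counterexample and, instead of a dipole cancellation, performs the fusion $C\,fus\,\{p,q\}$ on the parallel pair --- an operation defined whether or not the pair is a dipole --- and uses the dichotomy of \cite{lins1985simple}: either $|C|=|C'|$ or $|C|=|C'|\#(S^2\times S^1)$. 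Minimality rules out the first alternative, and triviality of $\pi_1(|C|)$ rules out the second, giving the contradiction. Your induction can be salvaged along exactly these lines, but as written --- with no $\pi_1$ computation and no fusion dichotomy --- its key step is unjustified; and your primary Step 2 route (exhibiting the gluing of the four triballs as a standard genus-$0$ splitting) is left entirely at the level of intention, as you yourself concede.
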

\begin{proof}
Removing all the edges of a given color still yields a connected graph.
So $C$ has precisely four 3-residues. 
Denote by $b_{ij}$ the number of $ij$-gons of $C$.
Each one of these residues are planar graphs
having $v=2n$ vertices, $3v/2$ edges and $b_{12}+b_{13}+b_{23}$, 
$b_{02}+b_{03}+b_{23}$, $b_{01}+b_{13}+b_{03}$ and  $b_{12}+b_{01}+b_{02}$ faces
for, respectively, the $\widehat{0}$-, $\widehat{1}$-, $\widehat{2}$-, 
$\widehat{3}$-residue. Adding the four formulas for the Euler characteristic of the sphere
imply that $v(C)+4=b(C)$. Therefore, $C$ is a crystallization having one $0i$-gon and
one $jk$-gon. This implies that the fundamental group of the induced manifold is trivial:
it is generated by $b_{0i}-1=0$ generators,
\cite{lins1988fundamental}. Since Poincaré Conjecture is now proved, we are done. 
However, we can avoiding using this fact and, as a bonus, obtaining the validity
of next corollary, which is used in the sequel.

Assume that $C$ is a $J^2$-gem which does not induce $S^3$ and has the smallest possible
number of vertices satisfying these assumptions. By planarity we must have a pair of edges 
$C$ having the same ends $\{p,q\}$. Consider the graph $C fus \{p,q\}$ obtained from
$C$ by removing the vertices $p$, $q$ and the 2 edges linking them as well as welding the 2
pairs of pendant edges along edges of the same color. In \cite{lins1985simple} S. Lins proved that
if $C$ is a gem, $C'=C fus \{p,q\}$ is also a gem and that two exclusive 
relations hold regarding $|C|$ and $ |C'|$, their induced 3-manifolds:
either $|C| = |C'|$ in the case that $\{p,q\}$ induces a 2-dipole
or else  $|C| = |C'| \# (S^2\times S^1)$. Since
$C'$ is a $J^2$-gem, by our minimality hypothesis on $C$
the valid alternative is the second. But this
is a contradiction: the fundamental group of $|C|$ would not be trivial, 
because of the summand $S^2\times S^1$. 
\end{proof}

The subgraph induced by vertices $u$ and $v$ of a gem $G$ is denoted by $G[u,v]$.
In general if $G[u,v]$ has two edges it might not be a 2-dipole: take
the attractor for $S^2 \times S^1$ with 8 vertices, \cite{lins1995gca}.
However, we have

\begin{corollary}\label{cor:ifginducess3}
If $G$ induces $S^3$  and $G[u,v]$ has two edges, then it is a 2-dipole.
\end{corollary}
\begin{proof}
Straightforward from the proof of the previous lemma: if it is not a 2-dipole
the fundamental group of $|G|$ would not be trivial because of the summand $S^2\times S^1$, 
contradicting \cite{lins1988fundamental}.
\end{proof}

Let $T_i^j$ and $T_i^k$ be the set of $j$-twistors and $k$-twistors
in an $i$-resolution of a resoluble gem $C$. 
The drawing $T_i(C)$ over the 3-residues has the purpose of
making evident the proof of the following essential Lemma:
\begin{lemma}
\label{lem:resolublegemwith}
Let $C$ be a resoluble gem with an $i$-resolution, $T_i^j$, $T_i^k$ as above.
Let $C'$ be the graph obtained by $ji$-twisting in an arbitrary order 
an arbitrary subset of $T_i^j$
and $ki$-twisting in arbitrary order an arbitrary subset of $T_i^k$. 
Then (a) $C'$ is a gem and is independent of the order of the twistings. Moreover,
(b) if we twist all members of $T_i^j \cup T_i^k$, then $C'$ is a $J^2$-gem.
\end{lemma}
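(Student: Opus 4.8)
The plan is to treat each individual twisting as a purely local modification of $C$ and then argue that, because the twistors of an $i$-resolution are disjoint and their gray edges are crossing-free, the modifications coming from distinct twistors act on pairwise-disjoint pieces of the graph and therefore commute. For part (a) I would first invoke Corollary \ref{cor:themanifestation}: a single $ji$-twisting of a $j$-twistor $e$ with vertices $u,v$ is realized inside the $\widehat{i}$-residue as the $k$-flip of the $e$-pair (the two $k$-colored edges crossed by the gray edge $e$) together with the interchange of the labels $u$ and $v$; symmetrically, a $k$-twistor flips a $j$-colored $e$-pair. By Proposition \ref{prop:twisting} each such move, performed in isolation, is an internal operation on bipartite gems, so it suffices to show that the flips attached to the twistors of $T_i^j \cup T_i^k$ have pairwise-disjoint supports, in both vertices and edges.

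Disjointness of the resolution immediately gives that the vertex pairs of distinct twistors are disjoint, so the label interchanges never collide. The supports of the flips are the $e$-pairs, and the crux of part (a) is to show that these are pairwise edge-disjoint. Twistors of opposite type contribute flips of different colors ($k$ versus $j$) and are automatically edge-disjoint; the only delicate case is two twistors of the same type whose $e$-pairs might share a single $k$- (or $j$-) colored edge, which happens exactly when a vertex of one twistor is the $k$-neighbor of a vertex of the other. Here I would use the crossing-free hypothesis together with Proposition \ref{prop:thegrayedgee}, which states that each gray edge crosses only the two edges of its own $e$-pair and nothing else of the $\widehat{i}$-residue: a shared $e$-pair edge would place both gray edges in the same $jk$-gon disk crossing the same boundary edge, and a planar analysis of the two gray arcs emanating from that disk forces a crossing of the spanning tree, contradicting crossing-freeness. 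Once edge-disjointness is established, the flips commute as transformations of the colored graph, each preserves the gem property, and hence $C'$ is a gem independent of the order and of the chosen subset. I expect this edge-disjointness argument to be the main obstacle, since it is the one place where the crossing-free hypothesis is genuinely used rather than mere disjointness.

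For part (b) I would pass to the graph $T_i(C)$ itself, whose vertices are the $jk$-gons of $C$ and whose resolution edges form a spanning tree. The key observation is that twisting a single twistor merges the two distinct $jk$-gons incident to its gray edge: the $k$-flip (resp. $j$-flip) of the $e$-pair splices the two $jk$-cycles through $u$ and $v$ into one cycle. Since the resolution is a spanning tree on the set of $jk$-gons and no edge of a tree closes a cycle, twisting all of $T_i^j \cup T_i^k$ composes these merges as in building a tree and reduces the number of $jk$-gons from $b_{jk}(C)$ to $1$, so $b_{jk}(C')=1$. As twisting preserves the number of $3$-residues, $C'$ is again a crystallization, and by Proposition \ref{prop:complementary} (pairing the complementary colors $\{0,i\}$ and $\{j,k\}$) we also get $b_{0i}(C')=1$. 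Thus the single $jk$-gon is a Hamiltonian cycle $X$ alternating colors $j,k$, the single $0i$-gon is a Hamiltonian cycle $Y$ alternating colors $0,i$, and $X\cup Y=C'$.

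It remains to realize $C'$ as a pattern of two Jordan curves with the correct $J^2$-coloring. I would draw $X$ as a circle through the $2n$ vertices. In the planar embedding of the $\widehat{i}$-residue (colors $0,j,k$) on $\S$, the unique $jk$-gon $X$ bounds one face, so all $0$-edges lie on the complementary disk and form a non-crossing chord diagram inside $X$; symmetrically, in the $\widehat{0}$-residue (colors $i,j,k$) the unique $jk$-gon bounds one face and all $i$-edges lie non-crossing on the other side, which I place outside $X$. Because $Y$ is the single $0i$-gon, its $0$-arcs (inside) and $i$-arcs (outside) are pairwise non-crossing and join into a single simple closed curve crossing $X$ transversally at the $2n$ vertices. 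Coloring the inside arcs $0$ and the outside arcs $i$ is exactly the scheme of Proposition \ref{prop:abovecoloring}, so $C'=X\cup Y$ is a $J^2$-gem, which completes the proof.
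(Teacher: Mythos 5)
Your overall route is the paper's: for (a) you invoke Corollary \ref{cor:themanifestation} (i.e.\ Proposition \ref{prop:planetwist}) to turn each twisting into a local move in the drawn $\widehat{i}$-residue, and for (b) you observe that each twisting merges the two $jk$-gons at the ends of its gray edge, so twisting along the spanning tree drives $b_{jk}$ down to $1$, whence $b_{0i}=1$ by Proposition \ref{prop:complementary}. Your closing paragraph, realizing $C'$ as two transversal Jordan curves (the $jk$-Hamiltonian cycle drawn as a circle, the $0$-edges as non-crossing chords inside and the $i$-edges as non-crossing chords outside, by planarity of the $\widehat{i}$- and $\widehat{0}$-residues, the unique $0i$-gon then closing up into a simple curve), is correct and is actually more explicit than the paper, which ends (b) with a bare ``so it is a $J^2$-gem''.

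The genuine problem is the step you yourself single out as the crux of (a): pairwise edge-disjointness of the $e$-pairs, deduced from crossing-freeness. That deduction fails. If two gray edges crossed a common $k$-colored edge $\epsilon$, then, since by Proposition \ref{prop:thegrayedgee} each gray edge crosses exactly two edges of the residue and each crossed edge lies on the boundary of one of its two endpoint disks, both gray arcs would emanate from the same gray vertex inside the $jk$-gon disk containing $\epsilon$ and exit across $\epsilon$; two arcs issuing from a common endpoint and crossing the same boundary edge can be drawn parallel and routed onward without ever meeting, so no crossing of the spanning tree is forced and crossing-freeness does not rule the configuration out. Fortunately the edge-disjointness you were after is not needed: the $ji$-twisting of $\{u,v\}$ simply re-mates the $i$-edges and the $j$-edges at $u$ and $v$, i.e.\ it replaces the $i$- and $j$-colored perfect matchings $M_i,M_j$ (viewed as involutions) by $(u\,v)M_i(u\,v)$ and $(u\,v)M_j(u\,v)$, and conjugations by vertex-disjoint transpositions commute; so order-independence of the final colored graph already follows from the disjointness of the resolution, with no condition on edges. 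Where the crossing-free spanning-tree hypothesis genuinely enters --- and where both you and the paper are quickest --- is in guaranteeing that each pair of $T_i^j\cup T_i^k$ is still a twistor at the moment it is twisted (for instance, acyclicity ensures the two $jk$-gons joined by a yet-untwisted gray edge remain distinct after earlier merges), which is what makes Proposition \ref{prop:twisting} applicable so that $C'$ is a gem; this is the content of the paper's ``local plane moves'' reading of Fig.~\ref{fig:jitwisting2}. Redirect your effort from the (unprovable, and unnecessary) edge-disjointness claim to this persistence statement and your proof of (a) closes.
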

\begin{proof}
The essence of the proof can be followed in Fig. \ref{fig:jitwisting2}.
The proof of (a) follows from Proposition \ref{prop:planetwist} (enabling the twisting
to become local plane moves -- clearly independing of the order)
and the proof of (b) from the fact that
each twisting decreases by 1 the number of $jk$-gons. A gem with one $jk$-gon induces
$S^3$: cancelling its 3-dipoles we are left with a crystallization with one $jk$-gon,
whence one $0i$-gon (by Prop. \ref{prop:complementary}). So, it is a $J^2$-gem.
\end{proof}

\subsection{From a $J^2$-gem $\mathcal{J}^2$ to an $n$-bloboid $\mathcal{B}$:
sequence $\mathcal{H}_{n},\mathcal{H}_{n-1},\ldots,\mathcal{H}_{1}$}

We keep our convention that $(i,j,k)=(1,2,3)$.
Given a 2-dipole $G[u,v]$ using colors $2,3$ and  
the $0$-colored edges (resp. the $1$-colored edges) $e$ and $f$
incident to $u$ and $v$, the \index{thickening} {\em thickening of $G[u,v]$} is the $0$-flip (respec. $i$-flip)
of edges $e$ and $f$. This produces a 3-dipole $G'[u,v]$ using the colors $\{0,2,3\}$
(resp $\{1,2,3\}$). The thickening $G[u,v]$ into $G'[u,v]$ is called a {\em thickening of a 2-dipole
into a 3-dipole}. 
An {\em $n$-bloboid} \index{bloboid} consists in a cyclic arrangement of $\{0,1,2\}$-residues
with 2 vertices (each a blob over a $3$-edge). 
A $J^2B$-gem is one which becomes a $J^2$-gem after their 3-dipoles are cancelled.

\begin{proposition}
\label{prop:startingwith}
Starting with a $J^2$-gem $\mathcal{J}^2$ with $2n$ vertices we can arrive to an
$n$-bloboid $\mathcal{B}$ by means of $n-1$ operations which thickens a 2-dipole into a 3-dipole,
producing a sequence of $J^2B$-gems,
$$(\mathcal{J}^2=\mathcal{H}_{n},\mathcal{H}_{n-1},\ldots, \mathcal{H}_{2},\mathcal{H}_{1}=\mathcal{B}).$$ 
\end{proposition}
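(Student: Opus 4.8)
The plan is to argue by induction on $n$, peeling off one blob per thickening. The engine of the induction is the observation that every gem in the sequence induces $S^3$: this holds for $\mathcal{H}_n=\mathcal{J}^2$ by Proposition \ref{prop:abovecoloring}, and each thickening is a flip that can be undone by cancelling the blob it creates (a local $3$-dipole cancellation), which by Proposition \ref{prop:twogemssame} preserves the induced manifold. Consequently Corollary \ref{cor:ifginducess3} applies throughout: in any $\mathcal{H}_m$ \emph{any} pair of vertices joined by a double edge is automatically a $2$-dipole. This frees me from verifying the dipole condition by hand and reduces the search for a thickenable $2$-dipole to the search for a pair of parallel edges.

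First I would locate, at each stage, such a pair of parallel edges with colours in $\{0,1,2\}$. Since the $\widehat{3}$-residue is a plane graph whose faces are bounded by bigons, an \emph{innermost} region of the current two-curve arrangement supplies two vertices $u,v$ joined by a double edge using two of the colours $0,1,2$; by the previous paragraph this is a $2$-dipole. I then thicken it by the single flip that inserts the third colour of $\{0,1,2\}$ between $u$ and $v$, turning the double edge into a triple edge and hence producing a blob over a $3$-edge. That the outcome is again a gem follows from the flip calculus already used in Proposition \ref{prop:planetwist}: the move is a local plane operation on the $\widehat{3}$-residue. Because colour $3$ is never touched, the $3$-coloured edges — which are exactly the $3$-segments of the single Jordan curve $X$ — are carried along unchanged.

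To keep the sequence inside the class of $J^2B$-gems I would check that cancelling the newly created blob returns a $J^2$-gem on two fewer vertices. The net effect of \emph{thicken-then-cancel} is precisely a $2$-dipole cancellation of the underlying $J^2$-gem, which erases the innermost bigon, i.e.\ removes two crossings of $X\cup Y$; the result is again the gem of a two-curve arrangement and hence a $J^2$-gem by Proposition \ref{prop:abovecoloring}. Thus $\mathcal{H}_{m}$ reduces, after cancelling all the blobs accumulated so far, to a genuine $J^2$-gem, which is the defining property of a $J^2B$-gem. Order independence of the thickenings is inherited from their being disjoint local plane moves, in the same spirit as Lemma \ref{lem:resolublegemwith}(a).

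The step I expect to be the real obstacle is global rather than local: showing that the blobs end up strung along a \emph{single} cycle, so that the terminal object is the cyclic bloboid $\mathcal{B}$ and not some other assembly of blobs. The $3$-coloured edges never fragment, since no thickening touches colour $3$; the task is to prove they always form one cycle and that after $n-1$ thickenings the last bigon is forced, so the remaining blob appears for free. I would track the $23$-gon (the curve $X$) throughout: each thickening replaces one $2$-edge of $X$ by a $\{0,1,2\}$-blob while leaving the interlacing $3$-edges intact, so the cyclic order of the blobs is dictated by $X$. Making this bookkeeping precise — in particular ruling out that the curve $Y$ forces an extra component or an obstructing crossing — is the crux, and is where I would spend most of the effort.
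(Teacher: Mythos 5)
Your induction is the same one the paper runs: at each stage the paper cancels the blobs to obtain a $J^2$-gem $\mathcal{H}'_\ell$, invokes planarity/the Jordan curve theorem to find a double edge there, upgrades it to a $2$-dipole (this is exactly Corollary~\ref{cor:ifginducess3}, which is why Proposition~\ref{prop:abovecoloring} was proved without quoting Poincar\'e), observes that the same $2$-dipole sits in $\mathcal{H}_\ell$, and thickens it. Your two supplementary checks --- that every $\mathcal{H}_m$ still induces $S^3$ because a thickening followed by cancelling the blob it creates has the same effect as cancelling the original $2$-dipole, so Proposition~\ref{prop:twogemssame} keeps the manifold fixed; and that thicken-then-cancel acts on the underlying $J^2$-gem as a $2$-dipole cancellation, i.e.\ erases an innermost bigon of the arrangement, so the $J^2B$ property persists --- are precisely the steps the paper leaves implicit, and you carry them out correctly.

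The genuine gap is the colour claim in your second paragraph, and, ironically, it is not where you locate the crux. An innermost region of the arrangement is bounded by one segment of $Y$ (coloured $0$ or $1$) and one segment of $X$ (coloured $2$ \emph{or} $3$); nothing prevents the available double edges from abutting $3$-segments, in which case their colours are $\{0,3\}$ or $\{1,3\}$, and a single flip then yields a blob over a $2$-edge (or over a $0$- or $1$-edge), not the $\{0,1,2\}$-blob over a $3$-edge that the definition of $n$-bloboid demands. You would need to show that at every stage some double edge whose $X$-side is $2$-coloured exists (or renegotiate the bloboid's colours); that is the missing step. By contrast, the ``single cycle'' bookkeeping to which you defer most of the effort comes for free once the colours are controlled: no thickening ever flips a $2$- or $3$-coloured edge, so the Jordan curve $X$ persists verbatim in every $\mathcal{H}_m$; when the underlying $J^2$-gem has shrunk to $2$ vertices, every $0$- and $1$-edge is confined to one of the $n-1$ blobs or to the two core vertices (whose $\{0,1,2\}$-triple edge is the $n$-th blob), and the untouched $3$-segments of $X$ automatically string the $n$ blobs into the cyclic arrangement --- $Y$ cannot force an extra component because its edges have all been absorbed. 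You should be aware that the paper itself is loose on exactly the point you slip on: its definition of thickening speaks of $2$-dipoles ``using colors $2,3$'', which cannot occur in a $J^2$-gem with more than $2$ vertices (the $23$-gon is the single curve $X$), and the $3$-dipoles it describes are $\{0,2,3\}$- or $\{1,2,3\}$-coloured while the bloboid is defined with $\{0,1,2\}$-blobs; its one-line appeal to the Jordan curve theorem does not settle the colour question either, so on this point your proposal reproduces, rather than fills, a gap already present in the paper's own proof.
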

\begin{proof}
The proof is by induction. For $\ell=n$ we have $\mathcal{H}_{n}=\mathcal{J}^2$ and
so it is a $J^2B$-gem, establishing the basis of the induction. Assume that
$\mathcal{H}_\ell$ is a $J^2B$-gem.
For $\ell>1$, let $\mathcal{H}'_\ell$ denote  $\mathcal{H}_\ell$ after cancelling the blobs.
Since $\mathcal{H}'_\ell$ is a $J^2$-gem by the Jordan curve theorem
a 2-dipole is present in it. The same 2-dipole is also present in $\mathcal{H}_\ell$.
Therefore it can be thickened, defining  $J^2B$-gem $\mathcal{H}_{\ell-1}$, which
establishes the inductive step.
\end{proof}

The above proof is illustrated in Fig. \ref{fig:j2gemrSEQ}. Note that the thickenings are of two
types: either of type $0$ where the edges $e$ and $f$ are $0$-edges in the interior of the $23$-gon, or else
of type $1$ if they  are $1$-edges in its exterior. The above sequence is by no means unique because
various 2-dipole choices present themselves along the way. 
 The inverse of each thickening corresponds in the dual of the gems to a {\em balloon-pillow
move} defined in the second part of the work. Hence the labellings $(pb)^\star$ in Fig. \ref{fig:j2gemrSEQ}.

\begin{figure}[!htb]
\begin{center}
\includegraphics[width=15.4cm]{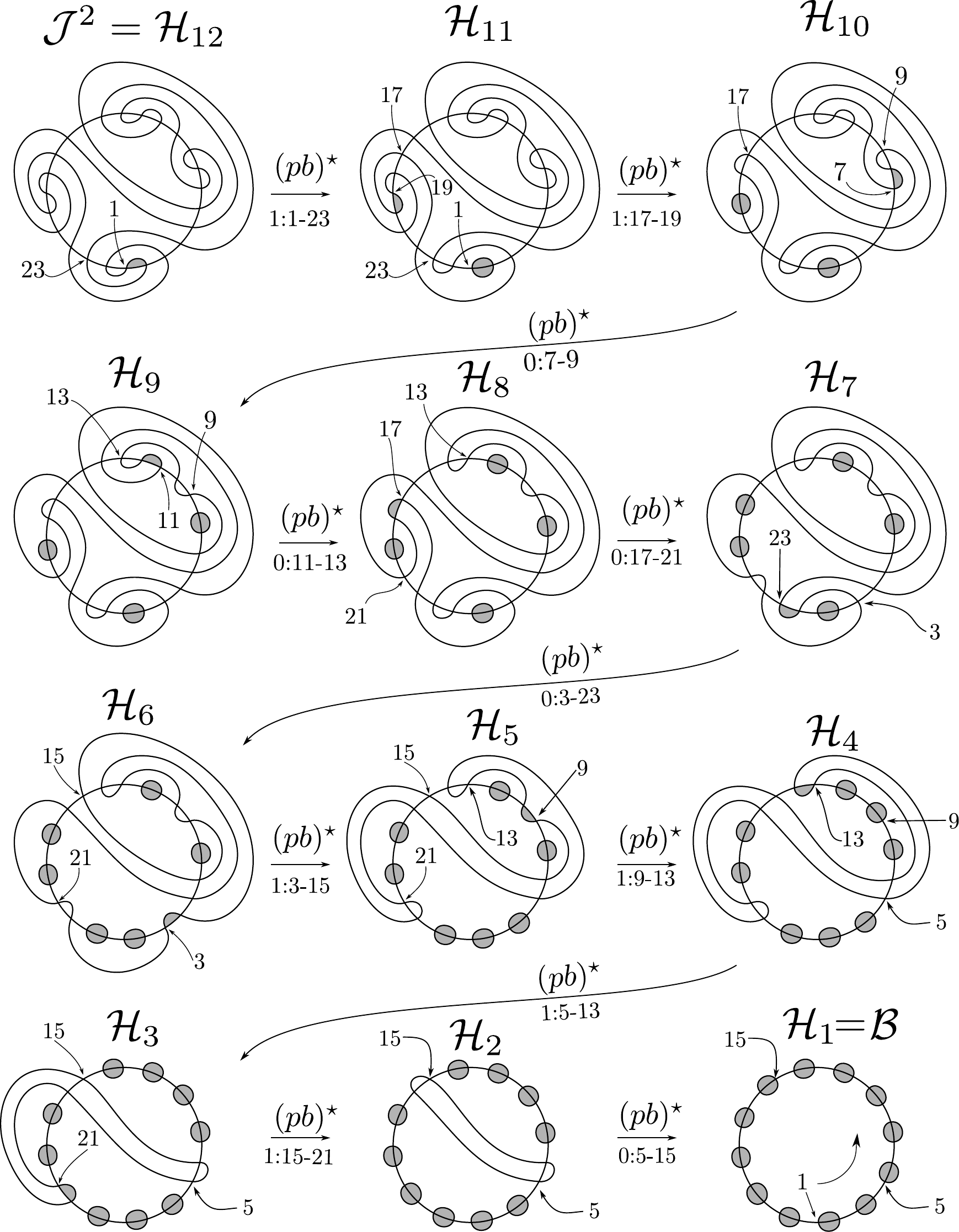} \\
\caption{\sf From the $J^2$-gem $\mathcal{J}^2$ to 
the $n$-bloboid $\mathcal{B}$ by 
a decreasing sequence of thickening 2-dipoles into 3-dipoles.}
\label{fig:j2gemrSEQ}
\end{center}
\end{figure}

\bibliographystyle{plain}
\bibliography{bibtexIndex.bib}

\begin{thebibliography}{10}

\bibitem{linsmachadoB2012}
Framed link presentations of 3-manifolds by an ${O}(n^2)$ algorithm, {II}:
  colored complexes and boundings in their complexity, author={Lins, S. and
  Machado, R.}, journal={arXiv:1212.0826v2 [math.GT]}, year={2012},.

\bibitem{linsmachadoC2012}
Framed link presentations of 3-manifolds by an ${O}(n^2)$ algorithm, {III}:
  geometric complex $\mathcal{H}_n^\star$ embedded into $\mathbb{R}^3$,
  author={Lins, S. and Machado, R.}, journal={arXiv:1212.0827v2 [math.GT]},
  year={2012},.

\bibitem{burton2012weber}
B.A. Burton, J.H. Rubinstein, and S.~Tillmann.
\newblock The {W}eber-{S}eifert dodecahedral space is non-{H}aken.
\newblock {\em Trans. Amer. Math. Soc}, 364:911--932, 2012.

\bibitem{costantino20083}
F.~Costantino and D.~Thurston.
\newblock 3-manifolds efficiently bound 4-manifolds.
\newblock {\em Journal of Topology}, 1(3):703--745, 2008.

\bibitem{fenn1979kirby}
R.~Fenn and C.~Rourke.
\newblock On {K}irby's calculus of links.
\newblock {\em Topology}, 18(1):1--15, 1979.

\bibitem{ferri1982crystallisation}
M.~Ferri and C.~Gagliardi.
\newblock {Crystallisation moves}.
\newblock {\em Pacific J. Math}, 100(1):85--103, 1982.

\bibitem{hatcher2002algebraic}
A.~Hatcher.
\newblock {\em {Algebraic topology}}.
\newblock Cambridge University Press, 2002.

\bibitem{kauffman1991knots}
L.H. Kauffman.
\newblock {\em Knots and physics}, volume~1.
\newblock World Scientific Publishing Company, 1991.

\bibitem{kauffman1994tlr}
L.H. Kauffman and S.~Lins.
\newblock {Temperley-Lieb Recoupling Theory and Invariants of 3-manifolds}.
\newblock {\em Annals of Mathematical Studies, Princeton University Press},
  134:1--296, 1994.

\bibitem{kirby1978calculus}
R.~Kirby.
\newblock A calculus for framed links in ${S}^3$.
\newblock {\em Inventiones Mathematicae}, 45(1):35--56, 1978.

\bibitem{Klarreich1012}
E~Klarreich.
\newblock Getting into shapes: from hyperbolic geometry to cube complexes.
\newblock {\em Simons Foundation}, October, 2012.

\bibitem{lickorish1962representation}
W.B.R. Lickorish.
\newblock A representation of orientable combinatorial 3-manifolds.
\newblock {\em Annals of Mathematics}, 76(3):531--540, 1962.

\bibitem{lins2007blink}
L.D. Lins.
\newblock Blink: a language to view, recognize, classify and manipulate
  3d-spaces.
\newblock {\em Arxiv preprint math/0702057}, 2007.

\bibitem{lins1985simple}
S.~Lins.
\newblock {A simple proof of Gagliardi's handle recognition theorem}.
\newblock {\em Discrete mathematics}, 57(3):253--260, 1985.

\bibitem{lins1986paintings}
S.~Lins.
\newblock {Paintings: A planar approach to higher dimensions}.
\newblock {\em Geometriae Dedicata}, 20(1):1--25, 1986.

\bibitem{lins1988fundamental}
S.~Lins.
\newblock {On the fundamental group of 3-gems and a planar class of
  3-manifolds}.
\newblock {\em European Journal of Combinatorics}, 9(4):291--305, 1988.

\bibitem{lins1995gca}
S.~Lins.
\newblock {Gems, Computers, and Attractors for 3-manifolds}.
\newblock {\em Knots and Everything}, pages 1--450, 1995.

\bibitem{lins1997twistors}
S.~Lins.
\newblock {Twistors: Bridges among 3-manifolds}.
\newblock {\em Discrete Mathematics}, 177(1):145--166, 1997.

\bibitem{lins2007cdlt}
S.~Lins.
\newblock {Combinatorial Dehn-Lickorish twists and framed link presentation of
  3-manifolds revisited}.
\newblock {\em J. Knot Theory and its Ramifications}, 16(10):1383--1392, 2007.

\bibitem{lins1999thin}
S.~Lins and J.~S. Carter.
\newblock {Thin-G theory and local moves for gems}.
\newblock {\em Adv. Math}, 143:251--283, 1999.

\bibitem{lins1985graph}
S.~Lins and A.~Mandel.
\newblock {Graph-encoded 3-manifolds.}
\newblock {\em Discrete Math.}, 57(3):261--284, 1985.

\bibitem{lins2006blobs}
S.~Lins and M.~Mulazzani.
\newblock {Blobs and flips on gems}.
\newblock {\em Journal of Knot Theory and its Ramifications}, 15(8):1001--1035,
  2006.

\bibitem{martelli2011finite}
B.~Martelli.
\newblock A finite set of local moves for {K}irby calculus.
\newblock {\em Arxiv preprint arXiv:1102.1288}, 2011.

\bibitem{moise1952affine}
E.E. Moise.
\newblock Affine structures in 3-manifolds: V. the triangulation theorem and
  hauptvermutung.
\newblock {\em The Annals of Mathematics}, 56(1):96--114, 1952.

\bibitem{rourke1982introduction}
C.P. Rourke and B.J. Sanderson.
\newblock {\em Introduction to piecewise-linear topology}, volume~69.
\newblock Springer-Verlag, 1982.

\bibitem{stillwell1993classical}
J.~Stillwell.
\newblock {\em {Classical topology and combinatorial group theory}}.
\newblock Springer Verlag, 1993.

\bibitem{thurston2010}
W.~Thurston.
\newblock The {M}ystery of {T}hree-{M}anifolds, {C}lay {R}esearch {C}onference
  at the {I}nstitut {O}ceanique, {P}aris - {june} 8, 2010. {S}ee
  ``www.youtube.com/watch?v=4jdmk{U}{Q}{D}{W}t{Q}''.
\newblock 2010.

\end{thebibliography}


\vspace{10mm}
\begin{center}

\hspace{7mm}
\begin{tabular}{l}
   S\'ostenes L. Lins\\
   Centro de Inform\'atica, UFPE \\
   Recife--PE \\
   Brazil\\
   sostenes@cin.ufpe.br
\end{tabular}
\hspace{20mm}
\begin{tabular}{l}
   Ricardo N. Machado\\
   Núcleo de Formação de Docentes, UFPE\\
   Caruaru--PE \\
   Brazil\\
   ricardonmachado@gmail.com
\end{tabular}

\end{center}

\end{document}